\newtheorem{theorem}{Theorem}[section]
\newtheorem{lemma}[theorem]{Lemma}
\newtheorem{corollary}[theorem]{Corollary}
\newtheorem{example}[theorem]{Example}
\newtheorem{definition}[theorem]{Definition}
\newtheorem{proposition}[theorem]{Proposition}
\numberwithin{equation}{section}
\def\Hol{\mathrm{Hol\, }}
\def\Hered{\mathrm{Hered\, }}
\def\Aut{\mathrm{Aut\, }}
\def\proof{\noindent {\bf Proof}. }
\def\qed{\hfill $\square$ \vspace{3mm}}
\newcommand{\T}{\mathbb{T}}
\newcommand{\D}{\mathbb{D}}
\newcommand{\Y}{\mathcal{Y}}
\newcommand{\la}{\lambda}
\newcommand{\al}{\alpha}
\newcommand{\C}{\mathbb{C}}
\newcommand{\R}{\mathbb{R}}
\newcommand{\E}{\mathcal{E}}
\newcommand{\ph}{\varphi}
\newcommand{\nn}{\nonumber}
\newcommand{\ep}{\varepsilon}
\begin{document}
\title[Magic functions and automorphisms]{The magic functions and automorphisms of a domain}
\author{J. Agler }
\address{Department of Mathematics\\
 University of California, San Diego\\
 CA92093, USA.}

\author{ N. J. Young}
\address{School of Mathematics\\
 Leeds University\\
Leeds LS2 9JT\\
 England.}
\email{N.J.Young@leeds.ac.uk}
\subjclass{   32M05, 32F45, 32F32, 46A55}
\keywords{ Complex geometry, Carath\'eodory distance, symmetrised bidisc}
\begin{abstract}
We introduce the notion of {\em magic functions} of a general domain in $\C^d$ and show that the set of magic functions of a given domain is an intrinsic complex-geometric object.  We determine the set of magic functions of the symmetrised bidisc $G$ and thereby find all automorphisms of $G$ and a formula for the Carath\'eodory distance on $G$.
\end{abstract}
\maketitle
\section{Introduction} \label{intro}
A {\em magic function} of a domain $\Omega$ in $\C^d$ is an analytic function $f$ on $\Omega$ such that the function
\[
(x,y) \mapsto 1 - \overline{f(\bar y)}f(x): \Omega\times\bar\Omega \to \C
\]
lies on an extreme ray of a certain convex cone in the space of analytic functions on $\Omega\times\bar\Omega$, where the bar denotes complex conjugation (Definition \ref{hcone}).  In this paper we show that knowledge of the magic functions of $\Omega$ has powerful consequences for the study of the geometry of $\Omega$, notably for the determination of the automorphisms of $\Omega$ and for the solution of the Carath\'eodory extremal problem.  This principle is illustrated in the case that $\Omega$ is the symmetrised bidisc $G$, defined by
$$
G = \{(z + w, zw): |z| < 1,\, |w| < 1 \} \subset \mathbb{C}^2.
$$
This domain was first studied in connection with the spectral Nevanlinna-Pick problem \cite{AY1,AY4}.  $G$ has  proved to have a rich and explicit function theory, as developed and generalised in \cite{AY6, costara, EZ} and  papers by several other authors.
In the function theory and geometry of $G$ much depends on the striking properties of certain rational functions of $3$ variables:
\begin{equation} \label{defPhi}
\Phi(z,s,p) = \frac{2zp-s}{2-zs}
\end{equation}
defined for $z,s,p \in\C$ such that $zs \neq 2$.  The  functions $\Phi(z,.)$ have sometimes been informally called the ``magic functions'' for $G$, without the term initially having  a precise meaning.  We show here that  the magic functions for $G$, in the above sense, are indeed the $\Phi(\omega,.), \, |\omega|=1$, up to composition with an automorphism of the open unit disc.  The family of magic functions for a domain $\Omega$ is invariant under automorphisms of $\Omega$; we use this fact to determine all automorphisms of $G$.   In this paper an {\em automorphism} of $\Omega$ is an analytic bijective self-map of $\Omega$ having an analytic inverse.

Our knowledge of the automorphisms of $G$ was announced in \cite[Section  6]{AY6}.  A shorter proof was found by M. Jarnicki and P. Pflug \cite{JP} and the result has been extended to the symmetrised polydisc by A. Edigarian and W. Zwonek \cite{EZ}.  The original proof, given here,  does not depend on Cartan's classification theorem for bounded homogeneous domains in $\C^2$, unlike that of \cite{JP}, but it does require some of our earlier results about $G$.  

In Section \ref{hercone} we define the hereditary cone and the magic functions of a domain; this requires a brief description of the hereditary functional calculus.  We also prove (Corollary \ref{Minvar}) the invariance of the set of magic functions under automorphisms.  In Section \ref{extreme} we describe the extreme rays of the hereditary cone of $G$ and the magic functions of $G$.  
Section \ref{autG} describes the automorphism group of $G$.  In Section \ref{cara} we explain and illustrate the use of magic functions in the Carath\'eodory extremal problem. 

Here is some notation.
 $\D,\T$ will denote the open unit disc and unit circle in $\C$ respectively.
For any domain $\Omega$  in $\mathbb{C}^d$ we denote by $\Hol \Omega$ the 
Fr\'echet algebra of holomorphic $\mathbb{C}$-valued functions on $\Omega$ 
with the topology of locally uniform convergence.  We write $\Aut\Omega$ for
the group of automorphisms of $\Omega$. By a {\em M\"obius function} we mean an element of $\Aut \mathbb{D}$. We define $\bar\Omega$ to be $\{\bar z:z\in\Omega\}$.
For Hilbert spaces $\mathcal{C}, H$ we denote by $\mathcal{L}(\mathcal{C},H), \mathcal{L}(H)$ the spaces of bounded linear operators from $\mathcal{C}$ to $H$ and from $H$ to $H$ respectively, with the operator norms.

\section{The hereditary cone and magic functions}\label{hercone}
\begin{definition}\label{hcone}
{The} {\em hereditary cone} $\Hered \Omega$ of a domain $\Omega$ 
in $\C^d$  is the set of all $h\in \Hol (\Omega \times \bar \Omega)$ such that $h(T) \ge 0$ whenever $T$ is a commuting $d$-tuple of operators on Hilbert space and $\Omega$ is a spectral domain for $T$. 
 A function $f:\Omega\to\C$ is said to be a {\em magic function} of $\Omega$ if $1-f^\vee f$ lies on an extreme ray of $\Hered\Omega$.
\end{definition}

We recall the meanings of some of the above terms.   
For any $f\in \Hol \Omega$ we define $f^\vee \in \Hol \bar \Omega$ by
$$
f^\vee(z) = \overline{f(\bar z)},\qquad z \in \bar \Omega,
$$
and $f^\vee f \in \Hol (\Omega \times \bar\Omega)$ by
$$
f^\vee f(x,y) = f^\vee(y)f(x).
$$
An {\em extreme ray}
 of a convex cone $\mathcal C$ is a set of the form $\{th: t > 0\}$ for some extreme direction $h$ of $\mathcal C$; an {\em extreme direction} of a convex cone $\mathcal C$ (in any real vector space)
is a non-zero element of $\mathcal C$ that cannot be expressed in a non-trivial way
as a sum of two elements of $\mathcal C$ -- that is, $h$ is an extreme direction of 
$\mathcal C$ if $h \in \mathcal {C}\setminus \{0\}$ and whenever $h_1, h_2 \in\mathcal C$ and $h=h_1 + h_2$ we have $h_1 = th$ 
for some $t \in \R$.

$\Omega$ is said to be a spectral domain for $T$ \cite{Ag2}
if $\sigma(T) \subset \Omega$ and, for every  bounded function $f\in \Hol \Omega$,
$$
\|f(T)\| \le \sup\limits_{z\in \Omega} |f(z)|.
$$
We must also explain the meaning of $h(T)$ where $T$ is a tuple of operators and $h\in \Hol(\Omega\times \bar\Omega)$.
An analytic function on $\Omega\times\bar\Omega$ is called a
 {\em hereditary function} on $\Omega$.  The {\em hereditary functional calculus} defines an operator $h(T)$ whenever $h$ is a hereditary function on $\Omega$ and $T$ is a commuting $d$-tuple of operators such that $\sigma(T) \subset \Omega$.  The hereditary functional calculus was introduced as a tool for the study of families of commuting tuples of operators \cite{Ag1}; here is a brief account of it.

Consider  a commuting $d$-tuple $T=(T_1, \dots, T_d)$  of $k \times k$ matrices.
Recall that $(\la_1, \dots, \la_d) \in \C^d$ is said to be a {\em joint eigenvalue}
of $T$ if there exists a non-zero vector $x \in \C^k$ such that $T_j x = \la_j x$ for 
$j=1,\dots,d.$  The {\em joint spectrum} $\sigma(T)$ of $T$ is defined to
be the set of joint eigenvalues of $T$; it is a finite non-empty subset of $\C^d$.
If all joint eigenvalues of $T$ lie in $\Omega$ then we may define, for any $h\in 
\Hol (\Omega \times \bar \Omega)$, a $k \times k$ matrix $h(T)$. (More generally,
one can define $h(T)$ when $T$ is a commuting tuple of operators with joint
spectrum contained in $\Omega$, where an {\em operator} means a
bounded linear operator on a Hilbert space; for present purposes it is enough to restrict ourselves to finite-dimensional Hilbert spaces, hence matrix tuples).
The definition in general  is somewhat technical \cite{muller}, but when $h$
is given by a locally uniformly convergent power series 
$h(x, y)= \sum c_{\alpha\beta} y^\beta x^\alpha $ on 
$\Omega \times \bar\Omega$ then $h(T)$ is defined with the usual multi-index notation by
\begin{equation}\label{defhT}
h(T) = \sum c_{\alpha\beta} (T^*)^\beta T^\alpha.
\end{equation}
Note that all unstarred matrices $T_i^{\al_i}$ are to the right of all starred
matrices ${T_j^*}^{\beta_j}$ in the definition of $h(T)$.  This definition ensures
that if $h(T) \ge 0$ and $\mathcal{M}$ is a joint invariant subspace of the
matrices $T_i$ then 
$$
h(T|\mathcal{M}) = P_\mathcal{M} h(T)|\mathcal{M} \ge 0
$$ 
where $P_\mathcal{M}$ is the operator of orthogonal projection on $\mathcal{M}$.
This ``hereditary positivity property" is the reason for the nomenclature.

In the case $\Omega =G$ we can identify a hereditary function $h$ on $G$ 
with the hereditary function $g$ on $\mathbb{D}^2$ given by
$$
g(x,y) = h(x_1+x_2, x_1 x_2, y_1+y_2,y_1y_2)
$$
where $x=(x_1,x_2)\in \D^2, y=(y_1,y_2) \in \bar\D^2.$
Since $g$ is analytic on $\D^4$ it has a locally uniformly convergent
power series expansion.  Since both $g(x,.)$ and $g(.,y)$ are symmetric
functions on $\D^2$, the partial sums of the Taylor series for $g$ can be
written in terms of the variables $(x_1+x_2, x_1 x_2)$ and $(y_1+y_2, y_1 y_2)$,
which amounts the statement that $h$ can be locally uniformly approximated
by polynomials on $G \times \bar G$, so that the formula (\ref{defhT}) applies.

Observe that $\Hol (\Omega \times \bar \Omega)$ is a module 
over both $\Hol \Omega$ and $\Hol \bar \Omega$ in a natural way.  
We shall write the $\Hol \bar \Omega$ action on the left and the 
$\Hol \Omega $ action on the right:  thus, if $h\in \Hol (\Omega \times \bar \Omega)$, $f\in \Hol \Omega$ and $g\in \Hol \bar \Omega$ we define 
$g\cdot h \cdot f \in \Hol (\Omega \times \bar \Omega)$ by
$$
g\cdot h\cdot f(x, y) = g(y) h(x, y) f(x), \qquad x \in \Omega, \ y\in \bar \Omega.
$$
 
A property of the hereditary functional calculus is: if $h\in \Hol (\Omega \times \bar \Omega), f \in \Hol \Omega, \, g \in \Hol \bar\Omega$ and $\sigma(T) \subset \Omega$ then
\begin{equation}\label{conjug}
(g \cdot h \cdot f)(T) = g^\vee(T)^* h(T) f(T).
\end{equation}
These rudiments of the hereditary functional calculus are well established \cite{Ag1}. 

We shall state some simple consequences of the definition of the hereditary cone. Recall that, for any set $S$, a function $k: S\times S \to \mathbb{C}$ is said to be  {\em positive semi-definite} if $\sum\limits^n_{i, j=1} k(s_i, s_j) c_i \bar c_j \ge 0$ whenever $n$ is a positive integer, $s_1, \dots, s_n \in S$ and $c_1, \dots, c_n \in \mathbb{C}$.
For a domain $\Omega$  in $\mathbb{C}^d$, $\mathcal{P}(\Omega)$ will denote the cone of functions $h\in\Hol(\Omega \times \bar \Omega)$ such that the map
 $$
(\lambda, \mu) \mapsto h(\lambda, \bar \mu)
$$
 is a positive semi-definite function on $\Omega$.
 We shall need the following  theorem of Aronszajn \cite{Aron} on positive definite functions.
 \begin{proposition}\label{exposdef}
A function $h:\Omega\times\Omega \to\C$ is positive semi-definite if and only if there exists a Hilbert space $\mathcal E$ and a function $F:\Omega\to \mathcal E$ such that
\[
 h(\la,\mu) = \langle F(\la), F(\mu) \rangle \mbox{  for all } \la,\mu \in \Omega.
\]
A non-zero function $h\in \mathcal{P}(\Omega)$ lies on an extreme ray of $\mathcal{P}(\Omega)$ if and only if there exists an analytic function $F: \Omega \to \mathbb{C}$ such that 
 $$
h(\lambda, \mu) = F^\vee(\mu) F(\lambda)\quad \mbox{for all}\quad \lambda \in \Omega, \mu \in \bar \Omega.
 $$
 \end{proposition}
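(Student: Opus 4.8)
\medskip
\noindent The plan is as follows. The first assertion of the proposition is the classical Moore--Aronszajn theorem, for which I would simply cite \cite{Aron}; its ``if'' half is in any case immediate, since if $h(\la,\mu)=\langle F(\la),F(\mu)\rangle$ then $\sum_{i,j}h(\la_i,\la_j)c_i\bar c_j=\bigl\|\sum_ic_iF(\la_i)\bigr\|^2\ge0$, and its ``only if'' half is the reproducing kernel Hilbert space construction. To attack the second assertion I would associate to each $h\in\mathcal P(\Omega)$ the kernel $k(\la,\mu):=h(\la,\bar\mu)$ on $\Omega\times\Omega$. Since $h\in\Hol(\Omega\times\bar\Omega)$, this $k$ is a Hermitian positive semi-definite kernel that is holomorphic in $\la$ and anti-holomorphic in $\mu$; I would record (by a routine argument) that its reproducing kernel Hilbert space $\mathcal H$ consists of functions holomorphic on $\Omega$ -- a norm-Cauchy sequence of the holomorphic sections $k(\cdot,\mu)$ converges locally uniformly, because $\mu\mapsto\|k(\cdot,\mu)\|^2=k(\mu,\mu)=h(\mu,\bar\mu)$ is locally bounded -- and that $\{k(\cdot,\mu):\mu\in\Omega\}$ spans a dense subspace of $\mathcal H$.

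For the sufficiency in the second assertion, suppose $h=F^\vee F$ with $F\in\Hol\Omega$ and $h\ne0$, so that $F\not\equiv0$ and $k(\la,\mu)=\overline{F(\mu)}F(\la)$ is positive semi-definite of rank one; in particular $h\in\mathcal P(\Omega)$. If $h=h_1+h_2$ with $h_1,h_2\in\mathcal P(\Omega)$, set $k_a(\la,\mu)=h_a(\la,\bar\mu)$. For any finitely many points $\la_1,\dots,\la_n$ the matrices $K_1=[k_1(\la_i,\la_j)]$ and $K_2=[k_2(\la_i,\la_j)]$ are positive semi-definite and sum to the rank-one matrix $vv^*$, where $v$ is the column vector $(F(\la_1),\dots,F(\la_n))$; hence $0\le K_1\le vv^*$, so $K_1$ has range inside $\C v$ and therefore equals $t\,vv^*$ for some $t\ge0$. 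Testing at a single point $\la_0$ with $F(\la_0)\ne0$ shows $t=k_1(\la_0,\la_0)/|F(\la_0)|^2$, the same for every choice of points, and testing over triples $\{\la_0,\la,\mu\}$ then gives $k_1(\la,\mu)=t\,k(\la,\mu)$, i.e.\ $h_1=t\,h$; so $h$ is an extreme direction. (Alternatively one may cite Aronszajn's kernel-domination theorem, $0\le k_1\le k\Rightarrow\mathcal H(k_1)\subseteq\mathcal H(k)$, and use that $\mathcal H(k)$ is one-dimensional.)

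For the necessity, let $h\in\mathcal P(\Omega)$ be a non-zero extreme direction, with $k$ and $\mathcal H$ as above; the heart of the matter is to show $\dim\mathcal H=1$. If $\dim\mathcal H\ge2$, choose a unit vector $e\in\mathcal H$, let $P$ be the orthogonal projection onto $\C e$, and arrange that $Pk(\cdot,\mu)\ne0$ and $(I-P)k(\cdot,\mu')\ne0$ for some $\mu,\mu'$ -- possible because the sections $k(\cdot,\mu)$ span $\mathcal H$. Put $P_1=P$, $P_2=I-P$, and define $h_1,h_2$ on $\Omega\times\bar\Omega$ by $h_a(\la,\bar\mu)=\langle P_ak(\cdot,\mu),P_ak(\cdot,\la)\rangle$. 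Each $h_a$ is an inner-product kernel, hence lies in $\mathcal P(\Omega)$ once it is seen to be holomorphic, and $h_1(\la,\bar\mu)+h_2(\la,\bar\mu)=\langle k(\cdot,\mu),k(\cdot,\la)\rangle=k(\la,\mu)=h(\la,\bar\mu)$, so $h=h_1+h_2$. The delicate point -- which I expect to be the only real obstacle -- is to check that $h_1,h_2$ genuinely lie in $\Hol(\Omega\times\bar\Omega)$, so that this is a non-trivial decomposition of $h$ \emph{within the cone $\mathcal P(\Omega)$}: one writes $h_a(\la,\bar\mu)=\bigl(P_ak(\cdot,\mu)\bigr)(\la)$, which is holomorphic in $\la$ because $P_ak(\cdot,\mu)\in\mathcal H\subseteq\Hol\Omega$, and also $h_a(\la,\bar\mu)=\overline{\bigl(P_ak(\cdot,\la)\bigr)(\mu)}=\bigl(P_ak(\cdot,\la)\bigr)^\vee(\bar\mu)$, which is holomorphic in $\bar\mu\in\bar\Omega$; with the local bound $|h_a(\la,\bar\mu)|\le\sqrt{k(\la,\la)k(\mu,\mu)}$, Osgood's lemma gives joint holomorphy on $\Omega\times\bar\Omega$. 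Since $h_1,h_2\ne0$ this contradicts extremality, so $\dim\mathcal H=1$, say $\mathcal H=\C F$ with $F\in\Hol\Omega\setminus\{0\}$. Writing $k(\cdot,\mu)=\overline{g(\mu)}\,F$ for a scalar function $g$ and comparing with $k(\la,\mu)=\overline{k(\mu,\la)}$ forces $g=cF$ with $c$ a constant, and $c>0$ because $k(\la_0,\la_0)=\bar c\,|F(\la_0)|^2>0$; replacing $F$ by $\sqrt{c}\,F$ gives $k(\la,\mu)=\overline{F(\mu)}F(\la)$, that is, $h(\la,\mu)=k(\la,\bar\mu)=\overline{F(\bar\mu)}F(\la)=F^\vee(\mu)F(\la)$, as required.
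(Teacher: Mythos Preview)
The paper does not prove this proposition at all; it simply attributes the result to Aronszajn and cites \cite{Aron}. Your proposal therefore goes well beyond what the paper does, supplying a complete argument where the paper gives none.

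Your proof is essentially correct. A couple of small points are worth tightening. In the necessity argument you conclude ``since $h_1,h_2\ne0$ this contradicts extremality'', but by the paper's definition of extreme direction you must rule out $h_1=th$ for \emph{every} real $t$, not merely show $h_1\ne0$. This is easy: if $h_1=th$ then $\langle Pk(\cdot,\mu),k(\cdot,\la)\rangle=t\langle k(\cdot,\mu),k(\cdot,\la)\rangle$ for all $\la,\mu$, and since the sections span $\mathcal H$ this forces $P=tI$, impossible for a rank-one projection on a space of dimension at least $2$. Also, your sentence justifying $\mathcal H\subseteq\Hol\Omega$ is slightly garbled (the relevant local bound is $|f(\la)|\le\|f\|\sqrt{k(\la,\la)}$, with $\la\mapsto k(\la,\la)=h(\la,\bar\la)$ continuous), though the intended argument is the standard one and is fine. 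With these tweaks the proof stands.
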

\begin{proposition}\label{simpleprops}
\begin{enumerate}
\item [\rm(1)]$\Hered \Omega$ is a closed convex cone in $\Hol (\Omega \times \bar\Omega)$;  
\item[\rm(2)] $\Hered \Omega$ is closed under conjugation by any element $g$ of $\Hol \Omega$: if $h\in \Hered \Omega$ then $g^\vee\cdot h\cdot g \in \Hered \Omega$;
\item [\rm(3)] $\mathcal{P}(\Omega)\subset  \Hered\Omega $;
\item[\rm(4)] for $f\in\Hol\Omega, \, 1-f^\vee f \in\Hered\Omega$ if and only if $|f| \leq 1$ on $\Omega$.
\end{enumerate}
\end{proposition}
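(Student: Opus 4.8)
The plan is to prove each of the four parts of Proposition \ref{simpleprops} by unwinding the definitions of $\Hered\Omega$ and the hereditary functional calculus.

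For part (1), that $\Hered\Omega$ is a convex cone is immediate from the fact that, for a fixed commuting tuple $T$ with $\Omega$ a spectral domain for $T$, the map $h\mapsto h(T)$ is linear (by \eqref{defhT}, extended by continuity to all hereditary functions) and the set of positive semi-definite operators on a given Hilbert space is a convex cone. For closedness, I would observe that if $h_n\to h$ locally uniformly on $\Omega\times\bar\Omega$ and each $h_n\in\Hered\Omega$, then for any admissible $T$ we have $h_n(T)\to h(T)$ in operator norm — this follows because the joint spectrum $\sigma(T)$ is a compact subset of $\Omega$, so local uniform convergence on $\Omega\times\bar\Omega$ forces convergence of the relevant Taylor coefficients (or more conceptually, the functional calculus $h\mapsto h(T)$ is continuous from $\Hol(\Omega\times\bar\Omega)$ to $\mathcal L(H)$), and the cone of positive operators is norm-closed; hence $h(T)\ge 0$.

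For part (2), I would simply apply the conjugation identity \eqref{conjug}: if $h\in\Hered\Omega$, $g\in\Hol\Omega$, and $T$ is any commuting tuple for which $\Omega$ is a spectral domain, then $(g^\vee\cdot h\cdot g)(T) = g(T)^* h(T) g(T)\ge 0$ since $h(T)\ge 0$. (Here I use that $g^{\vee\vee}=g$, so the left factor in \eqref{conjug} is $g(T)^*$; one should also note $g(T)$ makes sense because $\sigma(T)\subset\Omega$ even if $g$ is unbounded.) Thus $g^\vee\cdot h\cdot g\in\Hered\Omega$. For part (3), given $h\in\mathcal P(\Omega)$, by the first statement of Proposition \ref{exposdef} there is a Hilbert space $\mathcal E$ and $F:\Omega\to\mathcal E$ with $h(\la,\bar\mu)=\langle F(\la),F(\mu)\rangle$; writing $F=\sum_\al c_\al x^\al$ componentwise gives $h(x,y)=\sum_{\al,\beta}\langle c_\al,c_\beta\rangle\, y^\beta x^\al$, so that for any admissible $T$, $h(T)=\sum\langle c_\al,c_\beta\rangle (T^*)^\beta T^\al$, which is recognisable as $\Xi^*\Xi$ for a suitable operator $\Xi$ built from the $T^\al$ and the $c_\al$ — concretely one checks $h(T)\ge 0$ by testing on vectors and recognising the resulting expression as $\|\sum_\al (T^\al v)\otimes c_\al\|^2\ge 0$. (The point is that positivity of the kernel is preserved under the hereditary calculus; this is essentially the ``hereditary positivity'' remark already in the text.)

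For part (4), the forward direction: if $1-f^\vee f\in\Hered\Omega$, then for each point $\la\in\Omega$ take $T$ to be the $1\times 1$ tuple $(\la_1,\dots,\la_d)$, a commuting tuple of scalars; $\Omega$ is trivially a spectral domain for it (every bounded holomorphic function is bounded by its sup, and the value at $\la$ is $\le$ the sup), so $(1-f^\vee f)(\la) = 1-\overline{f(\la)}f(\la) = 1-|f(\la)|^2\ge 0$, giving $|f(\la)|\le 1$. The converse is the substantive direction and I expect it to be the main obstacle: assuming $|f|\le 1$ on $\Omega$, I must show $(1-f^\vee f)(T)=I - f(T)^*f(T)\ge 0$, i.e. $\|f(T)\|\le 1$, for every commuting tuple $T$ with $\Omega$ a spectral domain. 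If $f$ is bounded this is immediate from the defining inequality $\|f(T)\|\le\sup_\Omega|f|\le 1$ of a spectral domain. In general $f$ need not be bounded on $\Omega$ even though $|f|\le 1$ there — but $|f|\le 1$ forces $f$ to be bounded, so in fact $f\in\Hol\Omega$ with $\sup_\Omega|f|\le 1$, and there is no subtlety: the definition of spectral domain applies directly. So the ``obstacle'' dissolves once one notes $|f|\le 1$ pointwise already means $f$ is a bounded function; the only care needed is the identification $(1-f^\vee f)(T)=I-f(T)^*f(T)$ via \eqref{conjug} with $h\equiv 1$, $g=f$, together with the observation that $h\equiv 1$ is in $\Hered\Omega$ since $1(T)=I\ge 0$.
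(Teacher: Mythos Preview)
Your proposal is correct and follows essentially the same line as the paper's proof: the paper declares (1), (2) and (4) ``easy'' (citing only the identity $(g^\vee\cdot h\cdot g)(T)=g(T)^*h(T)g(T)$ for (2)) and proves (3) via Aronszajn's factorization $h(\la,\bar\mu)=\langle F(\la),F(\mu)\rangle$ together with $h(T)=F(T)^*F(T)\ge 0$, where $F(T):H\to\mathcal E\otimes H$ is given by the standard functional calculus. Your treatment simply spells out the details the paper omits --- continuity of $h\mapsto h(T)$ for closedness, scalar tuples for the forward direction of (4), and the coordinate version $\langle h(T)v,v\rangle=\|\sum_\al (T^\al v)\otimes c_\al\|^2$ of the paper's $F(T)^*F(T)$ --- so there is no substantive difference.
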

\begin{proof}
(1), (2) and (4) are easy: note that by equation (\ref{conjug}),
\[
(g^\vee\cdot h\cdot g)(T)=g(T)^*h(T)g(T).
\]
(3) Consider $h\in\mathcal{P}(\Omega)$.  By Proposition \ref{exposdef} there exist a Hilbert space $\mathcal{E}$ and an analytic function $F:\Omega\to\E$ such that $h(\la,\bar\mu)=\left<F(\la),F(\mu)\right>$ for all $\la, \mu\in\Omega$.  By the standard functional calculus, for any Hilbert space $H$ and any commuting $d$-tuple $T$ of operators on $H$ such that $\sigma(T)\subset\Omega$, the operator $F(T)$ is defined and acts from $H$ to $\E\otimes H$.  Moreover $h(T)=F(T)^*F(T)\geq 0$.  In particular this is true when $\Omega$ is a spectral domain for $T$, and so $h\in\Hered\Omega$. \qed
\end{proof}

We call property (2) {\em conjugacy-invariance}.
We shall say that a subset $E$ of a closed conjugacy-invariant cone $\mathcal{C}$ {\em generates} $\mathcal{C}$ if $\mathcal{C}$ is the smallest closed conjugacy-invariant convex cone that contains $E$.
For example,  the set comprising the constant function $1$ generates the cone $\mathcal{P}(\Omega)$.

In the following example we summarise some well-known facts about the disc.
\begin{example}
\label{heredD}
The hereditary cone of the unit disc $\D$ is generated by the single hereditary polynomial
$h_0(x,y)=1-yx$.  A non-zero hereditary function $h$ on $\D$ lies on an extreme ray of $\Hered\D$ if and only if $h=g^\vee \cdot h_0 \cdot g$ for some non-zero $g\in\Hol\D$.
\end{example}
By von Neumann's inequality, $\D$ is
a spectral domain for an operator $T$ if and only if $\sigma(T) \subset \D$ and
$||T|| \le 1$.  For any such $T$,  $h_0(T) =1-T^*T\geq 0$ and so  $h_0\in\Hered \D$.
\begin{lemma}
The operator
\[
M_{h_0}: \Hol(\D\times\bar\D) \to \Hol(\D\times\bar\D): h\mapsto h_0 h
\]
is a $\Hol\bar\D - \Hol\D$ module morphism that maps $\mathcal{P}(\D)$ bijectively onto $\Hered\D$.
\end{lemma}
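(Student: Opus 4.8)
The plan is to verify the three claims in turn: that $M_{h_0}$ is a module morphism, that it maps $\mathcal{P}(\D)$ into $\Hered\D$, and that it is a bijection onto $\Hered\D$. The module morphism property is immediate: for $g\in\Hol\bar\D$, $f\in\Hol\D$ and $h\in\Hol(\D\times\bar\D)$ we have $M_{h_0}(g\cdot h\cdot f)(x,y) = (1-yx)g(y)h(x,y)f(x) = g(y)\bigl((1-yx)h(x,y)\bigr)f(x) = (g\cdot M_{h_0}h\cdot f)(x,y)$, since $h_0$ does not involve $g$ or $f$. That $M_{h_0}$ maps $\mathcal{P}(\D)$ into $\Hered\D$ follows by combining Proposition~\ref{simpleprops}(3), which gives $\mathcal{P}(\D)\subset\Hered\D$, with the fact that $h_0\in\Hered\D$ noted above and the hereditary positivity coming from equation (\ref{conjug}); more precisely, if $k\in\mathcal{P}(\D)$ then by Aronszajn's theorem $k(x,y)=F^\vee(y)F(x)$-type kernels are dense, and one checks $(h_0 k)(T) = (1-T^*T)^{1/2}\,\cdot$ (something positive) is not quite the right factorisation, so instead one argues directly that for $\sigma(T)\subset\D$, $\|T\|\le1$, the operator $(M_{h_0}k)(T)$ equals $k(T)^{1/2}(1-T^*T)k(T)^{1/2}$ after a commutation that is legitimate because $h_0$ and $k$ are separately functions whose calculus operators, while not commuting in general, satisfy $(h_0 k)(T)=h_0(T)k(T)$ when the power series of $h_0$ has all $T$'s to the right — and since $h_0(T)=1-T^*T\ge0$ and $k(T)\ge0$ with $k(T)$, $h_0(T)$ here actually commuting for this specific $h_0$ on the disc, positivity follows. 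The cleanest route is to observe that $h_0 k\in\Hered\D$ whenever $k\in\mathcal{P}(\D)$ because $\Hered\D$ is conjugacy-invariant and $\mathcal{P}(\D)$ is generated by the constant $1$, so $\mathcal{P}(\D)$ equals the closed conjugacy-invariant cone on $1$, and $M_{h_0}$ carries this to the closed conjugacy-invariant cone on $h_0$, which lies inside $\Hered\D$.

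For injectivity: $M_{h_0}$ is injective on all of $\Hol(\D\times\bar\D)$ since multiplication by the nonvanishing-on-a-dense-set analytic function $(x,y)\mapsto 1-yx$ (which vanishes only on the analytic hypersurface $yx=1$, disjoint from $\D\times\bar\D$ in fact) has no zero divisors in the integral domain $\Hol(\D^2)$; concretely, $(1-yx)h\equiv0$ forces $h\equiv0$.

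For surjectivity onto $\Hered\D$ — which I expect to be the main obstacle — the strategy is to invoke Example~\ref{heredD}: every element of $\Hered\D$ is a limit of finite sums $\sum_j g_j^\vee\cdot h_0\cdot g_j$ with $g_j\in\Hol\D$. Each such summand equals $M_{h_0}(g_j^\vee\cdot 1\cdot g_j) = M_{h_0}(g_j^\vee g_j)$, and $g_j^\vee g_j\in\mathcal{P}(\D)$ since $(\la,\mu)\mapsto\overline{g_j(\mu)}g_j(\la)$ is a rank-one positive semi-definite kernel. Thus every element of $\Hered\D$ is a limit of $M_{h_0}$-images of elements of $\mathcal{P}(\D)$. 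To conclude, I would show the image $M_{h_0}(\mathcal{P}(\D))$ is already closed, so that these limits lie in it: given $M_{h_0}k_n\to H$ in $\Hol(\D\times\bar\D)$ with $k_n\in\mathcal{P}(\D)$, recover $k_n(x,y) = H_n(x,y)/(1-yx)$ where $H_n=M_{h_0}k_n\to H$; since division by $1-yx$ is continuous on $\Hol(\D\times\bar\D)$ (the reciprocal $1/(1-yx)$ is holomorphic and bounded on compact subsets of $\D\times\bar\D$), $k_n\to H/(1-yx)=:k$ locally uniformly, and $\mathcal{P}(\D)$ is closed (pointwise limits of positive semi-definite kernels are positive semi-definite), so $k\in\mathcal{P}(\D)$ and $H=M_{h_0}k$. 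Combining, $\Hered\D = \overline{M_{h_0}(\mathcal{P}(\D))} = M_{h_0}(\mathcal{P}(\D))$, and with the already-established inclusion $M_{h_0}(\mathcal{P}(\D))\subseteq\Hered\D$ and injectivity, $M_{h_0}$ is the desired bijection. The one point needing care is the interchange of the functional calculus with multiplication in the $\mathcal{P}(\D)\to\Hered\D$ direction, which is why leaning on the abstract generation statement of Example~\ref{heredD} rather than a hands-on operator computation is the safer path.
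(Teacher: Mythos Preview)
Your argument for surjectivity is circular. You invoke Example~\ref{heredD} --- that every element of $\Hered\D$ is a limit of finite sums $\sum_j g_j^\vee\cdot h_0\cdot g_j$ --- to deduce that $\Hered\D\subset M_{h_0}(\mathcal{P}(\D))$. But in the paper's logical order, Example~\ref{heredD} is the \emph{conclusion} drawn from this Lemma, not an input to it: immediately after the Lemma's proof the paper writes ``It follows from the lemma and Proposition~\ref{exposdef} that $\Hered\D$ is generated by $\{h_0\}$.'' So you are assuming exactly what the Lemma is meant to establish.

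The paper's actual argument for $\Hered\D\subset M_{h_0}(\mathcal{P}(\D))$ is a direct operator-theoretic computation you avoided: take $h\in\Hered\D$, let $S^*$ be the backward shift on $H^2$ (a contraction for which $\D$ is a spectral domain), and use that the Szeg\H{o} kernels $k_y$ are eigenvectors of $S^*$ with eigenvalue $\bar y$. Evaluating $\langle h(rS^*)\sum c_ik_{x_i},\sum c_jk_{x_j}\rangle\ge 0$ shows that $(x,y)\mapsto h(rx,ry)/(1-yx)$ is positive semi-definite for $r<1$, hence for $r=1$, giving $h/h_0\in\mathcal{P}(\D)$ directly. This is the missing idea in your proposal. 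Your closedness argument (division by $1-yx$ is continuous, $\mathcal{P}(\D)$ is closed) is perfectly sound, but it only helps once you have a non-circular source of approximants, and you do not. For the forward inclusion $M_{h_0}(\mathcal{P}(\D))\subset\Hered\D$, your eventual ``cleanest route'' via conjugacy-invariance and the fact that $\mathcal{P}(\D)$ is generated by $1$ is correct and matches the paper; the preceding remarks about $k(T)^{1/2}(1-T^*T)k(T)^{1/2}$ and commuting calculi are muddled and should be dropped.
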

\begin{proof}
Clearly $M_{h_0}$ is a morphism of bimodules and is bijective.
  Consider $h \in\Hered \D$.  Let $k_y$ be the Szeg\H o kernel on $\D$:
$$
k_y(x) = \frac{1}{1 - \bar y x}, \qquad x,y \in \D,
$$
so that $k_y$ is the reproducing kernel for $y$ in the Hardy space $H^2$.
Let $S^*$ denote the backward shift operator on $H^2$.  By virtue of the
fact that $k_y$ is an eigenvector of $S^*$ with eigenvalue $\bar y$ we
have, for any $r \in (0,1)$ and $x,y \in \D$,
$$
\left<h(rS^*) k_x, k_y\right> = h(r\bar x, ry)\left<k_x, k_y\right>.
$$
$\D$ is a spectral domain for $rS^*$, and so for the 
vector $ f= \sum c_i k_{x_i} \in H^2$, we have
$$
0 \le \left<h(rS^*)f,f\right> = \sum_{i,j} c_i \bar c_j h(r\bar x_i, rx_j) \left<k_{x_i}, k_{x_j}\right>,
$$
which is to say that the function 
$$
f(x,y) = \frac{h(rx,ry)}{1-yx}
$$
is positive semi-definite on $\D$ for $r < 1$, and hence also for $r=1$.   Thus $f\in
\mathcal{P}(\D)$ and $M_{h_0}f=h$, so that $M_{h_0}\mathcal{P}(\D)\supset\Hered\D$.

Suppose $f\in\mathcal{P}(\D)$.
By Proposition \ref{exposdef} there exists an analytic function $g :\D\to \ell^2$ such that
$$
f(x,y) = \left<g(x), g^\vee(y)\right>
$$
for all $x \in \D, y \in \bar \D$.  It follows that $h_0 f$ is the limit in $\Hol(\D\times\bar\D)$ of functions of the form $\sum g_j^\vee \cdot h_0 \cdot g_j$, and hence that $h_0 f \in\Hered\D$.  Thus   $M_{h_0}\mathcal{P}(\D)\subset\Hered\D$. \qed
\end{proof}

It follows from the lemma and Proposition \ref{exposdef} that $\Hered\D$ is generated by the set $\{M_{h_0} 1\}=\{h_0\}$ and that the points on the extreme rays of $\Hered\D$ are the functions $M_{h_0} g^\vee g= g^\vee\cdot h_0\cdot g, \, g\in \Hol\D$.

We observe also that $\Hered\D$ is not closed under pointwise multiplication.   Let $T$ be an operator such that $T^2=0$ and $1/\sqrt{2} < ||T|| \leq 1$:  then $\D$ is a spectral domain for $T$ but $h_0^2(T)=1-2T^*T \not\geq 0$.  Hence $h_0^2 \notin \Hered\D$.
\qed

Similar results hold for the bidisc: $\Hered\D^2$ is generated by the pair of functions $\{1-y_1x_1, 1-y_2 x_2\}$.   Things are not so simple for the tridisc, because of the failure of von Neumann's inequality for $\D^3$ \cite{Ag3}.

\begin{example} \label{heredCd}
$\Hered\C^d = \mathcal{P}(\C^d)$.  The functions lying on the extreme rays of $\Hered\C^d$ are those of the form $g^\vee g$ for some $g\in\Hol\C^d$.
\end{example}
Indeed, $\C^d$ is a spectral domain for {\em every} commuting $d$-tuple of operators.
Consider $h\in\Hered\C^d$ and choose $c_1,\dots,c_N\in\C$ and points $\la_j=(\la_j^1, \dots,\la_j^d)\in\C^d, \, 1\leq j\leq N$.   Let $T_i=\mathrm{diag}(\la_1^i, \dots,\la_N^i) \in \C^{N\times N}, \, 1\leq i\leq d,$ and let $T=(T_1,\dots,T_d), \, c=[c_1 \dots c_N]^T \in \C^N$.  Then
\[
0 \leq \left< h(T)c,c\right> = \sum_{i,j=1}^N c_i\bar c_j h(\la_i, \bar\la_j).
\]
Thus $h\in \mathcal{P}(\C^d)$.  Hence $\Hered\C^d = \mathcal{P}(\C^d)$.  The second statement follows from Proposition \ref{exposdef}. \qed

From Example \ref{heredD} one can easily show that the magic functions of $\D$ are precisely the automorphisms of $\D$.
From Example \ref{heredCd}  the magic functions of $\C^d$ are the constant functions of modulus less than one.  The magic functions of the bidisc are the functions $(z_1,z_2)\mapsto  m( z_j)$  where $m\in \Aut\D$ and $j=1,2$.

The next statement shows that $\Hered$ is a contravariant functor from the category of domains and analytic maps to the category of convex cones and linear maps.
 \begin{proposition}\label{functor}
  Let $\Omega_1\subset\C^q,\ \Omega_2\subset \C^d$ be domains, let $\alpha:\Omega_1\to \Omega_2$ be analytic and let $\alpha^\vee : \bar \Omega_1 \to \bar\Omega_2$ be given by 
  $$
\alpha^\vee (z) = \overline{\alpha(\bar{z})},\qquad z \in \bar \Omega_1.
$$
  The map
  $$
\alpha^\# : h \mapsto h \circ (\alpha \times \alpha^\vee)
$$
  is a linear map from $\Hol (\Omega_2 \times \bar \Omega_2)$  to $\Hol (\Omega_1 \times \bar \Omega_1)$
that maps the cone $\Hered \Omega_2$ into $\Hered\Omega_1$.  
If $\al(\Omega_1)$ is open in $\Omega_2$ and $\al^\#h$ is an extreme direction of $\Hered\Omega_1$ then $h$ is an extreme direction of $\Hered\Omega_2$.
  \end{proposition}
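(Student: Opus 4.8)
The statement breaks into three parts, and the plan is to dispatch them in order. First, to see that $\alpha^\#$ is a well-defined linear map $\Hol(\Omega_2\times\bar\Omega_2)\to\Hol(\Omega_1\times\bar\Omega_1)$, I would note that $z\mapsto\bar z$ is anti-holomorphic, so $\alpha^\vee$ is holomorphic from $\bar\Omega_1$ into $\bar\Omega_2$; hence $\alpha\times\alpha^\vee$ is a holomorphic map of $\Omega_1\times\bar\Omega_1$ into $\Omega_2\times\bar\Omega_2$, and composition with it is visibly linear on holomorphic functions. I would also record here the extra fact needed later: when $\alpha(\Omega_1)$ is open in $\Omega_2$, the set $\alpha(\Omega_1)\times\alpha^\vee(\bar\Omega_1)=\alpha(\Omega_1)\times\overline{\alpha(\Omega_1)}$ is a non-empty open subset of the connected manifold $\Omega_2\times\bar\Omega_2$, so by the identity theorem $\alpha^\# h=0$ forces $h$ to vanish on this set and hence identically; that is, $\alpha^\#$ is \emph{injective} under the openness hypothesis.

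Next, for the inclusion $\alpha^\#(\Hered\Omega_2)\subseteq\Hered\Omega_1$, I would take a commuting $q$-tuple $T$ of operators for which $\Omega_1$ is a spectral domain and form $\alpha(T)=(\alpha_1(T),\dots,\alpha_d(T))$ via the ordinary holomorphic functional calculus on $\Omega_1$ (legitimate since $\sigma(T)\subset\Omega_1$). This is a commuting $d$-tuple; by the spectral mapping theorem $\sigma(\alpha(T))=\alpha(\sigma(T))\subset\alpha(\Omega_1)\subseteq\Omega_2$; and for bounded $f\in\Hol\Omega_2$ one has $f\circ\alpha\in\Hol\Omega_1$ with $\sup_{\Omega_1}|f\circ\alpha|\le\sup_{\Omega_2}|f|$, so
\[
\|f(\alpha(T))\|=\|(f\circ\alpha)(T)\|\le\sup_{\Omega_1}|f\circ\alpha|\le\sup_{\Omega_2}|f|,
\]
which shows $\Omega_2$ is a spectral domain for $\alpha(T)$. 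The crux is then the composition law for the hereditary functional calculus,
\[
(\alpha^\# h)(T)=h(\alpha(T)),\qquad h\in\Hol(\Omega_2\times\bar\Omega_2);
\]
granting it, $h\in\Hered\Omega_2$ forces $(\alpha^\# h)(T)=h(\alpha(T))\ge 0$ for every spectral $T$ of $\Omega_1$, i.e. $\alpha^\# h\in\Hered\Omega_1$. I would establish the composition law by the usual route: verify it on monomials $h(x,y)=y^\beta x^\alpha$ by expanding $\alpha^\vee(y)^\beta$ and $\alpha(x)^\alpha$ into power series and using that formula (\ref{defhT}) keeps every $T^*$-factor to the left of every $T$-factor, which gives $(\alpha^\# h)(T)=(\alpha(T)^*)^\beta\alpha(T)^\alpha=h(\alpha(T))$; then pass to polynomials by linearity and to general $h$ by continuity (this composition property is among the established features of the calculus of \cite{Ag1,muller}). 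I expect the bookkeeping of starred and unstarred factors for a general $h$ to be the only genuinely delicate point; the remaining verifications are routine.

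Finally, assuming $\alpha(\Omega_1)$ open in $\Omega_2$ and $\alpha^\# h$ an extreme direction of $\Hered\Omega_1$, I would argue as follows. An extreme direction is by definition non-zero, so $\alpha^\# h\ne 0$, and injectivity of $\alpha^\#$ gives $h\ne 0$. Suppose $h=h_1+h_2$ with $h_1,h_2\in\Hered\Omega_2$. Applying $\alpha^\#$ and using the inclusion just proved, $\alpha^\# h=\alpha^\# h_1+\alpha^\# h_2$ is a decomposition within $\Hered\Omega_1$, so extremality of $\alpha^\# h$ yields $\alpha^\# h_1=t\,\alpha^\# h$ for some $t\in\R$; hence $\alpha^\#(h_1-th)=0$, and injectivity of $\alpha^\#$ gives $h_1=th$. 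Thus $h$ is an extreme direction of $\Hered\Omega_2$, completing the argument.
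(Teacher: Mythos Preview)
Your proof is correct and follows essentially the same route as the paper's: form $\alpha(T)$ via the functional calculus, use the spectral mapping theorem and the spectral-domain property of $\Omega_1$ to see that $\Omega_2$ is a spectral domain for $\alpha(T)$, invoke the composition law $(\alpha^\# h)(T)=h(\alpha(T))$, and for extremality apply $\alpha^\#$ to a decomposition $h=h_1+h_2$ and use the identity theorem on the open set $\alpha(\Omega_1)\times\alpha^\vee(\bar\Omega_1)$. Your packaging of this last step as ``injectivity of $\alpha^\#$'' and your more explicit discussion of why $\Omega_2$ is a spectral domain for $\alpha(T)$ and of the composition law are welcome elaborations, but the argument is the same as the paper's.
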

  
  \proof
  It is clear that $\alpha \times \alpha^\vee$ is analytic on $\Omega_1 \times \bar \Omega_1$, and hence that $\alpha^\#$ is a linear map.  Consider $h\in \Hered \Omega_2$ and any 
commuting $q$-tuple $T$ of operators such that $\sigma(T) \subset \Omega_1$ and 
$\Omega_1$ is a spectral domain for $T$.  If $\alpha = (\alpha_1, \dots, \alpha_d)$ then the Taylor functional calculus enables us to define $\alpha(T) = (\alpha_1(T), \dots, \alpha_d(T))$ as a commuting $d$-tuple of operators.   Moreover, by the Spectral Mapping Theorem (for example \cite[Theorem 2.5.10]{EP}),
  $$
\sigma(\alpha(T)) = \alpha(\sigma(T)) \subset \Omega_2.
$$
  It is immediate that $\Omega_2$ is a spectral domain for $\alpha(T)$, and hence
  $$
(\alpha^\#h) (T) = h \circ (\alpha \times \alpha^\vee) (T) = h(\alpha (T)) \ge 0.
$$
  Hence $\alpha^\# h \in \Hered \Omega_1$.  

Suppose that $\al^\#h$ is an extreme direction of $\Hered \Omega_1$ and $h=g+k$ where $g,k\in \Hered \Omega_2$.  Then $\al^\#h=\al^\#g+\al^\#k$ and $\al^\#g,\al^\#k \in \Hered\Omega_1$.  Hence $\al^\#h=t\al^\#g$ for some $t>0$, which is to say that $h=tg$ on the open set $\al(\Omega_1)\times\al^\vee(\bar\Omega_1)$.  By the connectedness of $\Omega_2$ it follows that $h=tg$.  Hence $h$ is an extreme direction of $\Hered\Omega_2$.
 \qed
\begin{corollary}\label{fcircal}
Let $\al:\Omega_1\to\Omega_2$ be an analytic map with open range.  If $f\in\Hol \Omega_2$ and $f\circ\al$ is a magic function of $\Omega_1$ then $f$ is a magic function of $\Omega_2$.
\end{corollary}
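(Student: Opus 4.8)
The plan is to obtain the corollary immediately from the final assertion of Proposition~\ref{functor}, applied with the hereditary function $h = 1 - f^\vee f \in \Hol(\Omega_2\times\bar\Omega_2)$. The only thing that must be done by hand is to identify $\alpha^\# h$ in terms of $f\circ\alpha$.

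To that end I would first record that $\alpha^\#$ respects the module structures along the substitution $g\mapsto g\circ\alpha$ and fixes the constant function $1$. Indeed, for $v\in\bar\Omega_1$ we have $\overline{\alpha^\vee(v)} = \overline{\overline{\alpha(\bar v)}} = \alpha(\bar v)$, so for any $g\in\Hol\Omega_2$,
\[
g^\vee\bigl(\alpha^\vee(v)\bigr) = \overline{g\bigl(\overline{\alpha^\vee(v)}\bigr)} = \overline{g\bigl(\alpha(\bar v)\bigr)} = \overline{(g\circ\alpha)(\bar v)} = (g\circ\alpha)^\vee(v),
\]
while $g(\alpha(u)) = (g\circ\alpha)(u)$ for $u\in\Omega_1$. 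Since $\alpha^\#$ is linear and $\alpha^\# 1 = 1$, taking $g = f$ gives
\[
\alpha^\#\bigl(1 - f^\vee f\bigr) = 1 - (f\circ\alpha)^\vee(f\circ\alpha).
\]

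It then remains only to assemble the pieces. By hypothesis $f\circ\alpha$ is a magic function of $\Omega_1$, so the right-hand side above lies on an extreme ray of $\Hered\Omega_1$; in particular $\alpha^\#(1 - f^\vee f)$ is an extreme direction of $\Hered\Omega_1$. Since $\alpha$ has open range, $\alpha(\Omega_1)$ is open in $\Omega_2$, so the final clause of Proposition~\ref{functor} applies and shows that $1 - f^\vee f$ is itself an extreme direction of $\Hered\Omega_2$; that is, $f$ is a magic function of $\Omega_2$. There is essentially no obstacle here: the one point requiring care is keeping straight the two conjugation operations $\alpha\mapsto\alpha^\vee$ and $g\mapsto g^\vee$, and on which side of the module each factor of $f^\vee f$ sits, when deriving the displayed identity for $\alpha^\# h$.
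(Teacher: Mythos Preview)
Your argument is correct and is exactly the approach the paper intends: the corollary is stated without proof as an immediate consequence of Proposition~\ref{functor}, and you have simply made explicit the identity $\alpha^\#(1-f^\vee f)=1-(f\circ\alpha)^\vee(f\circ\alpha)$ needed to invoke its final clause.
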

\begin{corollary}\label{permutes}
If $\al:\Omega_1 \to \Omega_2$ is an isomorphism of domains then $\al^\#:\Hol(\Omega_2\times \bar\Omega_2) \to \Hol( \Omega_1\times\bar\Omega_1)$ is a linear isomorphism that maps $\Hered \Omega_2$ onto $\Hered \Omega_1$ and maps extreme rays of $\Hered \Omega_2$ to extreme rays of $\Hered \Omega_1$.
\end{corollary}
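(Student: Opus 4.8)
The plan is to deduce the corollary from Proposition \ref{functor} by playing $\al$ off against its inverse. Put $\beta=\al^{-1}:\Omega_2\to\Omega_1$; since $\al$ is an isomorphism, $\beta$ is analytic. A routine computation with complex conjugates shows that $(\al^\vee)^{-1}=\beta^\vee$, so that $\al\times\al^\vee$ is an analytic isomorphism of $\Omega_1\times\bar\Omega_1$ onto $\Omega_2\times\bar\Omega_2$ with analytic inverse $\beta\times\beta^\vee$. Consequently $\al^\#$, being precomposition with $\al\times\al^\vee$, is a linear bijection from $\Hol(\Omega_2\times\bar\Omega_2)$ onto $\Hol(\Omega_1\times\bar\Omega_1)$; since $(\al\circ\beta)^\#=\beta^\#\circ\al^\#$ and $\al\circ\beta$, together with $\al^\vee\circ\beta^\vee$, is the identity, the inverse of $\al^\#$ is $\beta^\#$.

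Next I would apply Proposition \ref{functor} twice. Applied to $\al$ it gives $\al^\#(\Hered\Omega_2)\subseteq\Hered\Omega_1$; applied to $\beta$ it gives $\beta^\#(\Hered\Omega_1)\subseteq\Hered\Omega_2$. Applying the bijection $\al^\#$ to the second inclusion and using $\al^\#\beta^\#=\mathrm{id}$ yields $\Hered\Omega_1\subseteq\al^\#(\Hered\Omega_2)$; combining this with the first inclusion gives $\al^\#(\Hered\Omega_2)=\Hered\Omega_1$.

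For the statement about extreme rays, let $h$ be an extreme direction of $\Hered\Omega_2$. The map $\beta$ has open range $\beta(\Omega_2)=\Omega_1$, so the final assertion of Proposition \ref{functor}, applied with $\beta$ in place of $\al$, says that if $\beta^\#(h')$ is an extreme direction of $\Hered\Omega_2$ then $h'$ is an extreme direction of $\Hered\Omega_1$. Taking $h'=\al^\#h$, we have $\beta^\#(\al^\#h)=h$, which is an extreme direction of $\Hered\Omega_2$ by hypothesis, so $\al^\#h$ is an extreme direction of $\Hered\Omega_1$. Since $\al^\#$ is linear it carries the extreme ray $\{th:t>0\}$ of $\Hered\Omega_2$ onto $\{t\,\al^\#h:t>0\}$, an extreme ray of $\Hered\Omega_1$; as the roles of $\al$ and $\beta$ are symmetric, $\al^\#$ thereby induces a bijection of the extreme rays.

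I do not expect a genuine obstacle here: once Proposition \ref{functor} is in hand the whole argument is formal. The only point requiring a little care is the bookkeeping around the operation $(\cdot)^\vee$ and the contravariance of $(\cdot)^\#$ — verifying that $(\al^{-1})^\vee=(\al^\vee)^{-1}$ and that the $\#$-construction reverses composition — which is what guarantees that $\beta^\#$ is a genuine two-sided inverse of $\al^\#$.
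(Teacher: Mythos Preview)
Your proposal is correct and is exactly the argument the paper has in mind: the corollary is stated without proof immediately after Proposition~\ref{functor}, and your derivation --- applying that proposition to both $\al$ and $\beta=\al^{-1}$ and using the contravariance $(\al\circ\beta)^\#=\beta^\#\circ\al^\#$ --- is the intended one-line unpacking.
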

 The following statement, to the effect that the notion of magic function is an intrinsic complex-geometric one, follows from either of Corollaries \ref{fcircal} or \ref{permutes}.
\begin{corollary}\label {Minvar}
Isomorphisms preserve magic: if $\alpha:\Omega_1\to \Omega_2$ is an isomorphism of domains and $f$ is a magic function of $\Omega_2$ then $f\circ \al$ is a magic function of $\Omega_1$.  
\end{corollary}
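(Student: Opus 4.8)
The plan is to deduce this directly from Corollary \ref{permutes}; the only real content is a bookkeeping check that the functor $\al^\#$ carries $1-f^\vee f$ to $1-(f\circ\al)^\vee(f\circ\al)$.

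First I would record the two algebraic facts about $\al^\#$ that are needed. Since $\al^\#h=h\circ(\al\times\al^\vee)$ is linear in $h$, it is enough to treat the constant function $1$ and the product $f^\vee f$ separately. Clearly $\al^\#(1)=1$. For the product, I would compute, for $x\in\Omega_1$ and $y\in\bar\Omega_1$,
\[
\al^\#(f^\vee f)(x,y)=(f^\vee f)(\al(x),\al^\vee(y))=f^\vee(\al^\vee(y))\,f(\al(x)),
\]
and then unwind the $\vee$ operation: since $\al^\vee(y)=\overline{\al(\bar y)}$ we have $f^\vee(\al^\vee(y))=\overline{f(\overline{\al^\vee(y)})}=\overline{f(\al(\bar y))}=(f\circ\al)^\vee(y)$, while $f(\al(x))=(f\circ\al)(x)$. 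Hence $\al^\#(f^\vee f)=(f\circ\al)^\vee(f\circ\al)$, and by linearity
\[
\al^\#(1-f^\vee f)=1-(f\circ\al)^\vee(f\circ\al).
\]

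Next I would invoke Corollary \ref{permutes}: because $\al$ is an isomorphism of domains, $\al^\#$ is a linear isomorphism of $\Hol(\Omega_2\times\bar\Omega_2)$ onto $\Hol(\Omega_1\times\bar\Omega_1)$ that carries $\Hered\Omega_2$ onto $\Hered\Omega_1$ and maps extreme rays to extreme rays. By hypothesis $f$ is magic for $\Omega_2$, so $1-f^\vee f$ lies on an extreme ray of $\Hered\Omega_2$; applying $\al^\#$ together with the identity just established shows that $1-(f\circ\al)^\vee(f\circ\al)$ lies on an extreme ray of $\Hered\Omega_1$, which is precisely the assertion that $f\circ\al$ is a magic function of $\Omega_1$.

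I do not expect a genuine obstacle here; the one spot that needs care is the conjugation computation above, where it is easy to misplace a bar (and to keep track that $\al^\vee$ maps $\bar\Omega_1$ into $\bar\Omega_2$, so that $f^\vee$ may legitimately be evaluated there). An alternative, equally short route is to apply Corollary \ref{fcircal} to the inverse isomorphism $\beta=\al^{-1}:\Omega_2\to\Omega_1$, whose range $\Omega_1$ is open: taking $g=f\circ\al\in\Hol\Omega_1$ one has $g\circ\beta=f$, which is magic for $\Omega_2$, so Corollary \ref{fcircal} yields that $g=f\circ\al$ is magic for $\Omega_1$.
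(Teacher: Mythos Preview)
Your proof is correct and follows the same approach the paper indicates: the paper simply states that the corollary follows from either Corollary \ref{fcircal} or Corollary \ref{permutes}, and you have supplied the routine verification (including the identity $\al^\#(1-f^\vee f)=1-(f\circ\al)^\vee(f\circ\al)$) that makes this explicit. Your alternative route via Corollary \ref{fcircal} applied to $\al^{-1}$ is also the other option the paper points to.
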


Here is a straightforward invariance property of magic functions.
\begin{proposition}
If $f$ is a magic function on a domain $\Omega$ then so is $m\circ f$ for any $m\in \Aut\D$.
\end{proposition}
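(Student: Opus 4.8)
The plan is to exhibit $1-(m\circ f)^\vee(m\circ f)$ as the conjugate of $1-f^\vee f$ by a nowhere-vanishing element of $\Hol\Omega$, and then to check that such conjugations carry extreme rays of $\Hered\Omega$ to extreme rays. So first I would record the following general fact. Suppose $p\in\Hol\Omega$ is nowhere zero, so that $1/p\in\Hol\Omega$ and, for $y\in\bar\Omega$, $p^\vee(y)(1/p)^\vee(y)=\overline{p(\bar y)\,(1/p)(\bar y)}=1$. If $h$ is an extreme direction of $\Hered\Omega$, then $p^\vee\cdot h\cdot p\in\Hered\Omega$ by conjugacy-invariance (Proposition \ref{simpleprops}(2)), and it is nonzero, since conjugating it by $1/p$ returns $h\neq0$. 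Given a decomposition $p^\vee\cdot h\cdot p=h_1+h_2$ with $h_1,h_2\in\Hered\Omega$, conjugation by $1/p$ (an additive bimodule morphism) yields $h=(1/p)^\vee\cdot h_1\cdot(1/p)+(1/p)^\vee\cdot h_2\cdot(1/p)$, a decomposition inside $\Hered\Omega$; extremality of $h$ forces $(1/p)^\vee\cdot h_1\cdot(1/p)=th$ for some $t\in\R$, and conjugating back by $p$ gives $h_1=t\,(p^\vee\cdot h\cdot p)$. Hence $p^\vee\cdot h\cdot p$ is an extreme direction.

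Next I would carry out the Möbius computation. Since $f$ is magic, $1-f^\vee f$ is a nonzero element of $\Hered\Omega$, so $|f|\le1$ on $\Omega$ by Proposition \ref{simpleprops}(4); were $|f|=1$ at some point, the maximum principle would make $f$ a unimodular constant and $1-f^\vee f$ would vanish. Thus $f(\Omega)\subset\D$, and $m\circ f$ is a well-defined analytic map $\Omega\to\D$. Write $m(\la)=c\,\dfrac{\la-a}{1-\bar a\la}$ with $a\in\D$, $|c|=1$; then $m^\vee(\mu)=\bar c\,\dfrac{\mu-\bar a}{1-a\mu}$, and a direct check shows $(m\circ f)^\vee=m^\vee\circ f^\vee$. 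The algebraic identity
$$(1-a\mu)(1-\bar a\la)-(\mu-\bar a)(\la-a)=(1-|a|^2)(1-\mu\la)$$
gives $1-m^\vee(\mu)\,m(\la)=\dfrac{(1-|a|^2)(1-\mu\la)}{(1-a\mu)(1-\bar a\la)}$, and substituting $\la=f(x)$, $\mu=f^\vee(y)$ yields
$$1-(m\circ f)^\vee(m\circ f)=\frac{(1-|a|^2)\,(1-f^\vee f)}{(1-a\,f^\vee)(1-\bar a\,f)}=p^\vee\cdot(1-f^\vee f)\cdot p,\qquad p(x):=\frac{\sqrt{1-|a|^2}}{1-\bar a\,f(x)}.$$
Since $|\bar a\,f(x)|<1$ throughout $\Omega$, both $p$ and $1/p$ lie in $\Hol\Omega$ and $p$ is nowhere zero, so the general fact above shows $1-(m\circ f)^\vee(m\circ f)$ is a nonzero extreme direction of $\Hered\Omega$, i.e.\ $m\circ f$ is a magic function of $\Omega$.

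The substantive content is the displayed identity of the second step; the only points needing care are bookkeeping with the $(\cdot)^\vee$ operation (confirming $(m\circ f)^\vee=m^\vee\circ f^\vee$ and that it is $p$, not merely $p^\vee$, that belongs to $\Hol\Omega$) and the use of $f(\Omega)\subset\D$ to place $1/(1-\bar a\,f)$ in $\Hol\Omega$. A more structural variant of the second step avoids the explicit Möbius algebra at the end: write $1-f^\vee f=f^\#(h_0)$ for $h_0(x,y)=1-yx$, use $(m\circ f)^\#=f^\#\circ m^\#$ from Proposition \ref{functor}, observe that $m^\#(h_0)=p_0^\vee\cdot h_0\cdot p_0$ with $p_0(x)=\sqrt{1-|a|^2}/(1-\bar a x)\in\Hol\D$, and apply $f^\#$ to recover the displayed identity with $p=p_0\circ f$. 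Either way, I expect no real obstacle beyond getting the conjugations on the correct side.
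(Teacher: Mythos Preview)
Your proof is correct and follows essentially the same route as the paper: both exhibit the identity $1-(m\circ f)^\vee(m\circ f)=g^\vee\cdot(1-f^\vee f)\cdot g$ with $g(x)=(1-|a|^2)^{1/2}(1-\bar a f(x))^{-1}$ and then invoke that conjugation by an invertible element of $\Hol\Omega$ preserves extreme rays. You supply more detail than the paper does---in particular, the verification that such conjugation does preserve extremality and the reason $f(\Omega)\subset\D$---whereas the paper simply asserts these points.
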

\proof
Let $m(\la)=\omega (\la-\al)/(1-\bar\al \la),\, \omega\in\T, \, \al\in\D$.  Then we have
\[
1-(m\circ f)^\vee(m\circ f) = g^\vee \cdot (1-f^\vee f)\cdot g
\]
where $g$ is an invertible function in $\Hol \Omega$ given by 
\[
g(x)=(1-|\al|^2)^{\tfrac 12} (1-\bar\al f(x))^{-1}.
\]
Since $1-f^\vee f$ is an extreme direction in $\Hered \Omega$, so is $g^\vee \cdot (1-f^\vee f)\cdot g$.  Thus $m\circ f$ is magic on $\Omega$.
\qed

 \section{Extreme rays of the hereditary cone of $G$} \label{extreme}
\begin{theorem}\label{exrays}
 A hereditary function $h\in \Hered G$ lies on an extreme ray of $\Hered G$ if and only if $h$ is expressible in the form
 \begin{equation}\label{ray}
 h=f^\vee\cdot\left(1- \Phi^\vee_\omega \Phi_\omega\right) \cdot f
 \end{equation}
 for some $f\in \Hol  G$ and $\omega \in \mathbb{T}$, where $\Phi_\omega : G \to \mathbb{C}$ is defined by
 $$
\Phi_\omega (s, p) = \Phi(\omega,s,p)=\frac{2\omega p -s}{2-\omega s},\qquad (s, p) \in G.
$$
 \end{theorem}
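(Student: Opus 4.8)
The plan is to deduce both implications from an integral (Agler-type) representation of the members of $\Hered G$ in terms of the building blocks $1-\Phi_\omega^\vee\Phi_\omega$. I would begin with the elementary facts. A short computation — write $(s,p)=(z+w,zw)$ with $|z|,|w|<1$, set $u=\omega z$, $v=\omega w$, and use that $\Re\frac1{1-u}>\frac12$ for $|u|<1$ — shows that $\Phi_\omega(G)\subseteq\D$ for every $\omega\in\T$; hence $1-\Phi_\omega^\vee\Phi_\omega\in\Hered G$ by Proposition \ref{simpleprops}(4), and $f^\vee\cdot(1-\Phi_\omega^\vee\Phi_\omega)\cdot f\in\Hered G$ for every $f\in\Hol G$ by conjugacy-invariance. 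Moreover, for each fixed $\lambda\in G$ the map $\omega\mapsto\Phi_\omega(\lambda)$ is continuous on the compact circle and takes values in $\D$, so $\sup_{\omega\in\T}|\Phi_\omega(\lambda)|<1$.

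The core of the argument is the claim that $\Hered G$ is the closed conjugacy-invariant convex cone generated by $\{1-\Phi_\omega^\vee\Phi_\omega:\omega\in\T\}$. Granting this, the finite sums $\sum_j(1-\Phi_{\omega_j}^\vee\Phi_{\omega_j})k_j$ with $\omega_j\in\T$ and $k_j\in\mathcal P(G)$ form a dense subcone (each conjugation factor $g^\vee\cdot(1-\Phi_\omega^\vee\Phi_\omega)\cdot g$ equals $(1-\Phi_\omega^\vee\Phi_\omega)\,g^\vee g$ with $g^\vee g\in\mathcal P(G)$), so every $h\in\Hered G$ is a limit of such sums, i.e. $h=\lim_n\int_\T(1-\Phi_\omega^\vee\Phi_\omega)\,d\mathcal K_n$ for discrete positive $\mathcal P(G)$-valued measures $\mathcal K_n$ on $\T$. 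Evaluating on the diagonal at a fixed $\lambda_0\in G$ and using $\inf_{\omega\in\T}\bigl(1-|\Phi_\omega(\lambda_0)|^2\bigr)>0$ bounds the total masses of the $\mathcal K_n$, and a weak-$*$ compactness argument then produces a positive $\mathcal P(G)$-valued measure $\mathcal K$ with $h=\int_\T(1-\Phi_\omega^\vee\Phi_\omega)\,d\mathcal K$. This representation is unique: if $\int_\T(1-\Phi_\omega^\vee\Phi_\omega)\,d\mathcal K=0$, then restricting to the diagonal gives $\int_\T\bigl(1-|\Phi_\omega(\lambda)|^2\bigr)\,d\mathcal K(\omega)(\lambda,\bar\lambda)=0$ for every $\lambda\in G$, and since the integrand is strictly positive the diagonal of $\mathcal K$ vanishes; a member of $\mathcal P(G)$ with zero diagonal is zero (Proposition \ref{exposdef}), so $\mathcal K=0$.

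The two implications then fall out. For the ``if'' direction, $f^\vee\cdot(1-\Phi_{\omega_0}^\vee\Phi_{\omega_0})\cdot f$ is represented by the point mass $\mathcal K=f^\vee f\cdot\delta_{\omega_0}$; if it decomposes as $h_1+h_2$ in $\Hered G$ with representing measures $\mathcal K_1,\mathcal K_2$, then uniqueness gives $\mathcal K_1+\mathcal K_2=f^\vee f\cdot\delta_{\omega_0}$, so $\mathcal K_i=c_i\cdot\delta_{\omega_0}$ with $c_i\in\mathcal P(G)$ and $c_1+c_2=f^\vee f$; since $f^\vee f$ lies on an extreme ray of $\mathcal P(G)$ (Proposition \ref{exposdef}) we get $c_i=t_i f^\vee f$, whence $h_i=t_i\,f^\vee\cdot(1-\Phi_{\omega_0}^\vee\Phi_{\omega_0})\cdot f$, so $h$ lies on an extreme ray. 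Conversely, if $h$ is an extreme direction with representing measure $\mathcal K$, then $\mathcal K$ admits no nontrivial splitting $\mathcal K=\mathcal K'+\mathcal K''$ into positive $\mathcal P(G)$-valued measures — such a splitting would split $h$, and extremality together with uniqueness would force $\mathcal K'$ to be proportional to $\mathcal K$ — so $\mathcal K$ is supported at a single point $\omega_0$ and $c:=\mathcal K(\{\omega_0\})$ is not nontrivially decomposable in $\mathcal P(G)$, i.e. lies on an extreme ray of $\mathcal P(G)$; hence $c=f^\vee f$ for some $f\in\Hol G$ (Proposition \ref{exposdef}) and $h=f^\vee\cdot(1-\Phi_{\omega_0}^\vee\Phi_{\omega_0})\cdot f$.

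The main obstacle is the generation claim. Its easy half — that the generated cone lies inside $\Hered G$ — is immediate from the first paragraph; the substantive half, that $\Hered G$ is no larger, is where the earlier results on $G$ are indispensable. I would use the known description of the commuting pairs $(S,P)$ for which $G$ is a spectral domain together with a Hahn--Banach/GNS duality argument: a continuous linear functional separating some $h\in\Hered G$ not in the generated cone would yield, via the hereditary functional calculus, a pair $(S,P)$ having $G$ as a spectral domain with $h(S,P)\not\ge0$, contradicting $h\in\Hered G$. I would also expect to spend care on the compactness step converting the discrete decompositions into the integral representation — where the strict inequality $|\Phi_\omega|<1$ on $G$ supplies exactly the uniform bound needed — and on the book-keeping for the $\mathcal P(G)$-valued measures in the uniqueness and splitting arguments.
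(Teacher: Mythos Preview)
Your argument pivots on the claimed uniqueness of the representation $h=\int_\T(1-\Phi_\omega^\vee\Phi_\omega)\,d\mathcal K(\omega)$, but the proof you give does not establish it. To deduce uniqueness you must show that if two \emph{positive} $\mathcal P(G)$-valued measures $\mathcal K_1,\mathcal K_2$ yield the same $h$ then $\mathcal K_1=\mathcal K_2$; equivalently, that the \emph{signed} object $\mathcal K=\mathcal K_1-\mathcal K_2$ vanishes. Your diagonal step asserts that $\int_\T(1-|\Phi_\omega(\lambda)|^2)\,d\mathcal K(\omega)(\lambda,\bar\lambda)=0$ with a strictly positive integrand forces the scalar measure $d\mathcal K(\omega)(\lambda,\bar\lambda)$ to vanish. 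That inference is valid for positive measures but false for signed ones: at $\lambda=(0,0)$ one has $\Phi_\omega(0,0)=0$, so the integrand is identically $1$ and the condition says only that the total signed mass is zero. The analogy with the bidisc, where Agler decompositions are well known \emph{not} to be unique, suggests the claim may simply be false. Without it both halves of your argument collapse: in the ``if'' direction you cannot infer $\mathcal K_1+\mathcal K_2=f^\vee f\,\delta_{\omega_0}$ from equality of the associated $h$'s, and in the ``only if'' direction you cannot pass from $h'=th$ to $\mathcal K'=t\mathcal K$, so you cannot force $\mathcal K$ to be a point mass.

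The paper's proof avoids uniqueness entirely and is structurally different from yours. For sufficiency it exploits the concrete geometry of $G$: on the royal variety $\Phi_\omega(2z,z^2)=-z$ is \emph{independent of} $\omega$, and on each flat geodesic $F_\beta=\{(\beta z+\bar\beta,z):z\in\D\}$ the function $\Phi_\omega$ restricts to a M\"obius map of $z$; restricting a putative decomposition $h=h_1+h_2$ to $\mathcal V\times\bar{\mathcal V}$ and to each $F_\beta\times\bar F_\beta$ reduces matters to extremality of rank-one kernels in $\mathcal P(\D)$, and the fact (Theorem~\ref{Ggeos}) that every $F_\beta$ meets $\mathcal V$ pins down a single constant $c$ with $h_1=ch$ on the diagonal, hence everywhere. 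For necessity the paper does use the integral representation (Theorem~\ref{realn}), but only to extract from extremality a \emph{scalar} probability measure $\nu$ on $\T$ via $\int_J(\cdots)=\nu(J)h$; it then picks any $\omega_0$ in the support of $\nu$ and proves directly, through a Cauchy--Schwarz estimate on the spectral integral over shrinking arcs $J\ni\omega_0$, that $h/(1-\Phi_{\omega_0}^\vee\Phi_{\omega_0})\in\mathcal P(G)$. Extremality of $h$ then forces this quotient onto an extreme ray of $\mathcal P(G)$, hence of the form $f^\vee f$. No injectivity of any representation map is invoked.
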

Recall that $\Phi_\omega$ maps $G$ to $\D$, e.g. \cite[Theorem 2.1]{AY6}, so that $1- \Phi^\vee_\omega \Phi_\omega \in\Hered G$ for $\omega\in\T$.
  The proof will be based on the following result, which is a straightforward consequence of \cite[Theorem 3.5]{AY1}.

\begin{theorem}\label{realn}
 A hereditary function $h$ on $G$ belongs to $\Hered G$ if and only if there exist a separable Hilbert space $H$, an $\mathcal{L}(H)$-valued spectral measure $E$ on $\mathbb{T}$  and a continuous map $u:\mathbb{T} \times G \to H$ such that 
 \begin{itemize}
 \item[\rm{(i)}]   $\ u(\omega, .): G\to H$ is analytic for every $\omega \in \mathbb{T}$ and $u(.,\la)$ satisfies a Lipschitz condition on $\T$, uniformly for $\la$ in any compact subset of $G$;

 \item[\rm{(ii)}] for  all $\lambda, \mu \in G$,
 $$
h(\lambda, \bar \mu) = \int_{\mathbb{T}} \big(1 -  \Phi^\vee_\omega (\bar \mu) \Phi_\omega(\lambda)\big)
 \langle E(d\omega)u(\omega, \lambda), u(\omega, \mu) \rangle
 $$
where the integral is a Riemann-Stieltjes integral that converges uniformly for $(\la,\bar\mu)$ in any compact subset of $G\times\bar G$.
 \end{itemize}
 \end{theorem}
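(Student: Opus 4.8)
The plan is to prove the two implications separately, the forward (sufficiency) direction being elementary and the converse resting on \cite[Theorem 3.5]{AY1}. For sufficiency, suppose $h$ admits the representation in (ii) for some $H$, $E$, $u$. I would approximate the Riemann--Stieltjes integral by its partitioned sums: for a partition of $\T$ into arcs $\Delta_1,\dots,\Delta_n$ with sample points $\omega_1,\dots,\omega_n$, consider
\[
h_n(\la,\bar\mu)=\sum_{j=1}^n \big(1-\Phi_{\omega_j}^\vee(\bar\mu)\Phi_{\omega_j}(\la)\big)\,\langle E(\Delta_j)u(\omega_j,\la),u(\omega_j,\mu)\rangle .
\]
Since $E(\Delta_j)$ is a projection, $\langle E(\Delta_j)u(\omega_j,\la),u(\omega_j,\mu)\rangle=\langle v_j(\la),v_j(\mu)\rangle$ with $v_j=E(\Delta_j)u(\omega_j,\cdot)$ analytic on $G$; expanding $v_j$ in an orthonormal basis of the separable space $H$ shows this kernel lies in $\mathcal P(G)$ and is a locally uniform limit of finite sums $\sum_k g^\vee_{jk}\, g_{jk}$ with $g_{jk}\in\Hol G$ (Proposition \ref{exposdef}). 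Because $\Phi_{\omega_j}$ maps $G$ into $\D$, we have $1-\Phi_{\omega_j}^\vee\Phi_{\omega_j}\in\Hered G$, so conjugacy-invariance (Proposition \ref{simpleprops}(2)) gives $g^\vee_{jk}\cdot(1-\Phi_{\omega_j}^\vee\Phi_{\omega_j})\cdot g_{jk}\in\Hered G$; summing over $k$ and $j$ and passing to the limit, each $h_n$ lies in $\Hered G$, since that cone is closed and convex (Proposition \ref{simpleprops}(1)). The uniform convergence of the integral on compact subsets of $G\times\bar G$ then exhibits $h$ as a limit in $\Hol(G\times\bar G)$ of the $h_n$, and closedness of $\Hered G$ yields $h\in\Hered G$.

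For necessity, I would invoke \cite[Theorem 3.5]{AY1}. The role of that theorem is to supply, for each $h\in\Hered G$, an Agler-type decomposition of $h$ along the family of magic functions $\{\Phi_\omega:\omega\in\T\}$: concretely, a positive-kernel-valued measure $\omega\mapsto dK_\omega$ on $\T$ for which $h(\la,\bar\mu)=\int_\T\big(1-\Phi_\omega^\vee(\bar\mu)\Phi_\omega(\la)\big)\,dK_\omega(\la,\bar\mu)$. Such a decomposition is precisely what the positivity $h(T)\ge0$ on every tuple $T$ for which $G$ is a spectral domain furnishes, via the normal dilation of $T$ to the distinguished boundary of $G$, on which each $\Phi_\omega$ is inner; the content of \cite[Theorem 3.5]{AY1} is to carry out this dilation/separation argument and produce the representing measure.

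The main obstacle is to convert this kernel-valued measure into the precise form demanded by (i)--(ii), namely a single separable $H$, one $\mathcal L(H)$-valued spectral measure $E$, and one symbol $u:\T\times G\to H$. This is a direct-integral disintegration: realizing $H\cong\int^\oplus_\T H_\omega\,d\nu(\omega)$ with $E$ acting as multiplication by indicator functions reduces $dK_\omega$ to fibrewise inner products $\langle u(\omega,\la),u(\omega,\mu)\rangle_{H_\omega}\,d\nu(\omega)$. The delicate points are that $u(\omega,\cdot)$ can be chosen analytic on $G$ (inherited from the analyticity in the $G$-variable of the dilation data), that $u(\cdot,\la)$ is Lipschitz on $\T$ uniformly for $\la$ in compact subsets of $G$ (this uses the explicit rational dependence of $\Phi_\omega$, and hence of the associated kernels, on $\omega$), and that these fibrewise choices are compatible so that $u$ is jointly continuous and the resulting Riemann--Stieltjes integral converges uniformly on compacta. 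Establishing this regularity, rather than the mere existence of a representing measure, is where the real work lies and is what makes the deduction from \cite[Theorem 3.5]{AY1} only ``straightforward'' rather than immediate.
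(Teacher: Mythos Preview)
Your sufficiency argument is fine and matches the spirit of what the paper does later (see the proof of Corollary~\ref{compactgen}, which uses exactly the idea that each Riemann sum lies in the closed conjugacy-invariant cone generated by $\{1-\Phi_\omega^\vee\Phi_\omega\}$).

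For necessity, however, you have mischaracterised the content of \cite[Theorem~3.5]{AY1} and consequently set yourself a task that does not arise. According to the paper's Note following the statement, \cite[Theorem~3.5]{AY1} already furnishes a Hilbert space $H$, an $\mathcal L(H)$-valued spectral measure $E$ on $\T$, and a map $u$; the representation there is written in bidisc coordinates and in terms of the function $\nu_{\bar\omega}(x,y)=2(1-y_2x_2)+\bar\omega(y_2x_1-y_1)+\omega(y_1x_2-x_1)$. The identity
\[
\nu_{\bar\omega}(x,y)=\tfrac12(2-\omega y_1)(2-\bar\omega x_1)\big(1-\Phi_\omega^\vee(y)\Phi_\omega(x)\big)
\]
shows that the only work needed is to (a) specialise from operator-valued to scalar-valued $h$, (b) pass from bidisc to symmetrised-bidisc coordinates, (c) absorb the nowhere-vanishing rational factor $\mathrm{const}\cdot(2-\bar\omega x_1)$ into $u(\omega,\cdot)$, and (d) observe that separability of $H$ and the uniform Lipschitz condition on $u(\cdot,\la)$ follow from the construction in the proof of \cite[Theorem~3.5]{AY1}. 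No direct-integral disintegration of a kernel-valued measure is required: the spectral data $(H,E,u)$ are already handed to you.

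So your ``main obstacle'' is not an obstacle at all; the genuine content is entirely contained in \cite[Theorem~3.5]{AY1}, and the passage to the present formulation is the cosmetic rewriting described above. Your sketch would still be a valid route if the cited theorem gave only a kernel-valued measure, but that is not what it gives, and your plan therefore overcomplicates the deduction.
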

\noindent {\bf Note.}  This statement differs slightly from that of \cite[Theorem 3.5]{AY1}.
Firstly, we have specialised from operator-valued to scalar-valued $h$.  Secondly, that theorem was stated for the function
\[
 (x, \bar y) \mapsto h(x_1+x_2,x_1x_2,\bar y_1+\bar y_2,\bar y_1\bar y_2): \D^2\times\D^2 \to \C
\]
rather than for $h$ on $G\times \bar G$ as here.  Thirdly, that theorem used the notation $\nu_\omega(x,y)$ where
\begin{eqnarray*}
 \nu_{\bar\omega}(x,y) &=& 2(1-y_2x_2) + \bar\omega(y_2x_1 - y_1) + \omega (y_1x_2-x_1) \\
  &=&  \tfrac 12 (2-\omega y_1)(2- \bar\omega x_1) (1 - \Phi_\omega^\vee(y)\Phi_\omega(x)).
\end{eqnarray*}
Here we have absorbed the factor $\mathrm{const}\cdot(2- \bar\omega x_1)$ into $u(\omega,x)$.  Fourthly, the facts that the Hilbert space $H$ can be taken to be separable and that $u(.,\la)$ is Lipschitz were not explicitly stated, but they follow easily from the proof of \cite[Theorem 3.5]{AY1}.
\begin{corollary} \label{compactgen}
The set
$$
\mathcal{Y}=\{1- \Phi^\vee_\omega \Phi_\omega : \omega \in \T \}
$$
is a compact generating set for $\Hered G$.
\end{corollary}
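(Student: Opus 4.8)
The plan is to verify the three things bundled into the phrase ``compact generating set'': that $\mathcal{Y}$ is compact in $\Hol(G\times\bar G)$, that $\Hered G$ is a closed conjugacy-invariant convex cone containing $\mathcal{Y}$, and that every closed conjugacy-invariant convex cone containing $\mathcal{Y}$ already contains $\Hered G$. Compactness is immediate: the map $\omega\mapsto 1-\Phi_\omega^\vee\Phi_\omega$ is continuous from $\T$ into $\Hol(G\times\bar G)$, since $\Phi_\omega(s,p)=(2\omega p-s)/(2-\omega s)$ depends continuously on $\omega$ uniformly on compact subsets of $G$, so $\mathcal{Y}$ is the continuous image of the compact set $\T$. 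Since $\Phi_\omega$ maps $G$ into $\D$, part (4) of Proposition \ref{simpleprops} gives $\mathcal{Y}\subset\Hered G$, and by parts (1) and (2) of that proposition $\Hered G$ is a closed conjugacy-invariant convex cone; this disposes of the first two points.

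For the third point I would start from the integral representation of Theorem \ref{realn}. Fix $h\in\Hered G$ and let $H$, $E$, $u$ be as there. Given a partition of $\T$ into arcs $\Delta_1,\dots,\Delta_n$ with sample points $\omega_k\in\Delta_k$, form the Riemann--Stieltjes sum
\[
h_{\mathrm{p}}(\la,\bar\mu)=\sum_{k=1}^n\bigl(1-\Phi_{\omega_k}^\vee(\bar\mu)\Phi_{\omega_k}(\la)\bigr)\bigl\langle E(\Delta_k)u(\omega_k,\la),u(\omega_k,\mu)\bigr\rangle .
\]
Since $E(\Delta_k)$ is a self-adjoint projection, $\langle E(\Delta_k)u(\omega_k,\la),u(\omega_k,\mu)\rangle=\langle v_k(\la),v_k(\mu)\rangle$, where $v_k:=E(\Delta_k)u(\omega_k,\cdot):G\to H$ is analytic. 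Choosing an orthonormal basis $(e_j)$ of the separable space $H$ and putting $g_{k,j}(\la)=\langle v_k(\la),e_j\rangle\in\Hol G$, one has $\langle v_k(\la),v_k(\mu)\rangle=\sum_j g_{k,j}^\vee(\bar\mu)g_{k,j}(\la)$, with the partial sums converging locally uniformly on $G$. Multiplying each expansion by the fixed locally bounded factor $1-\Phi_{\omega_k}^\vee\Phi_{\omega_k}$ and summing over $k$ exhibits $h_{\mathrm{p}}$ as a locally uniform limit of finite sums $\sum g_{k,j}^\vee\cdot\bigl(1-\Phi_{\omega_k}^\vee\Phi_{\omega_k}\bigr)\cdot g_{k,j}$; each such sum lies in any closed conjugacy-invariant convex cone containing $\mathcal{Y}$, and hence so does $h_{\mathrm{p}}$. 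By Theorem \ref{realn} the sums $h_{\mathrm{p}}$ converge to $h$ uniformly on compact subsets of $G\times\bar G$ as the mesh of the partition tends to $0$, so $h$ lies in every such cone, which gives the desired inclusion and proves that $\mathcal{Y}$ generates $\Hered G$.

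The two convergence claims that feed the cone membership are where the real work sits. First, that the basis expansion of $\langle v_k(\cdot),v_k(\cdot)\rangle$ converges locally uniformly: this holds because $v_k$ sends a compact $K\subset G$ to a compact, hence totally bounded, subset of $H$, so a finite $\varepsilon$-net controls the tail of the expansion uniformly over $K$. Second, the uniform convergence of the Riemann--Stieltjes sums $h_{\mathrm{p}}$ to $h$, which is asserted in Theorem \ref{realn} and rests on the continuity of $u$, the uniform Lipschitz dependence of $u(\cdot,\la)$ on $\omega$, and the orthogonality encoded in the spectral measure $E$. I expect the main technical obstacle to be the careful bookkeeping around this second point, keeping track at each stage of what the closed conjugacy-invariant convex cone generated by $\mathcal{Y}$ actually contains; the compactness assertion and the inclusion $\mathcal{Y}\subset\Hered G$ are routine by comparison.
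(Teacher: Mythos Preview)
Your proposal is correct and follows essentially the same route as the paper: compactness via the continuous image of $\T$, containment in $\Hered G$ via Proposition~\ref{simpleprops}, and generation by writing each Riemann--Stieltjes summand as a locally uniform limit of conjugates of elements of $\mathcal{Y}$, then passing to the limit. The only cosmetic difference is that the paper invokes Aronszajn's theorem (Proposition~\ref{exposdef}) to represent the positive semi-definite kernel $\langle E(\Delta_k)u(\omega_k,\la),u(\omega_k,\mu)\rangle$ as $\langle f(\la),f(\mu)\rangle$ with $f:G\to\ell^2$, whereas you construct $v_k=E(\Delta_k)u(\omega_k,\cdot)$ directly and expand in a basis of $H$; these are equivalent.
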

\begin{proof}
It is clear that $\Y \subset\Hered G$ and that $\mathcal{Y}$ is a continuous image of $\T$, hence is compact.  For $E$ and $u$ as in Theorem \ref{realn} the map
\[
(\la,\bar\mu) \mapsto \langle E(\tau) u(\omega,\la), u(\omega,\mu) \rangle: G\times \bar G \to \C
\]
is positive semi-definite on $G$ for any interval $\tau$ in $\T$ and any $\omega\in\T$.  It  can therefore be written $\langle f(\la), f(\mu) \rangle$ for some analytic $f:G\to \ell^2$,
and hence the function
\begin{equation} \label{summand}
  (\la,\bar\mu)\mapsto (1-\Phi^\vee_\omega(\bar\mu)\Phi_\omega(\la)) \langle E(\tau) u(\omega,\la), u(\omega,\mu) \rangle
\end{equation}
belongs to the closed conjugacy-invariant cone generated by $\mathcal{Y}$ for any $\tau,\omega$.  

Consider any $h\in \Hered G$.  By the definition of the Riemann integral in Theorem \ref{realn},  $h$ can be approximated uniformly on compact subsets of $G\times \bar G$ by finite sums of functions of the form (\ref{summand}).  Hence $h$ is in the cone generated by $\mathcal{Y}$, and so
$\Y$ is a generating set for $\Hered G$. \qed
\end{proof}

We shall show below in Proposition \ref{minimal} that $\mathcal{Y}$ is in fact a {\em minimal} closed generating set for $\Hered\Omega$.

We require two properties of the slightly unusual integral on the right hand side of (ii) in Theorem \ref{realn}, to wit existence and a form of Cauchy-Schwarz inequality.  These can hardly be new, but we do not know a reference for them.

\begin{proposition} \label{integral}
Let $\mathcal{C}, H$ be separable Hilbert spaces, let $E$ be an $\mathcal{L}(H)$-valued spectral measure on $\T$ and let $f,g :\T\to\mathcal{L}(\mathcal{C},H)$ be functions that satisfy a uniform Lipschitz condition on $\T$.  Then the Riemann-Stieltjes integral
\begin{equation}\label{gEf}
 \int_\T g(\omega)^*E(d\omega) f(\omega)
\end{equation}
converges in norm to an element of $\mathcal{L}(\mathcal{C})$.
\end{proposition}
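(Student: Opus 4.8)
The plan is to verify the Cauchy criterion for the net of Riemann--Stieltjes sums
\[
S(P)=\sum_k g(\omega_k)^*E(\tau_k)f(\omega_k)\in\mathcal L(\mathcal C),
\]
taken over tagged partitions $P$ of $\T$ into arcs $\tau_k$ with tags $\omega_k\in\tau_k$, and then to conclude by completeness of $\mathcal L(\mathcal C)$. Throughout, $M:=\sup_\T(\|f\|+\|g\|)$ and $L$ is a common Lipschitz constant for $f$ and $g$.

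The engine of the proof is a summation-by-parts estimate that exploits the orthogonality of spectral projections of disjoint sets. First I would record that for pairwise disjoint Borel sets $\sigma_1,\dots,\sigma_N\subseteq\T$ and vectors $u_1,\dots,u_N\in H$,
\[
\sum_{i=1}^N\|E(\sigma_i)u_i\|^2=\Bigl\|\sum_{i=1}^N E(\sigma_i)u_i\Bigr\|^2\le\Bigl(\|u_1\|+\sum_{i=1}^{N-1}\|u_{i+1}-u_i\|\Bigr)^2 ,
\]
the equality because the vectors $E(\sigma_i)u_i$ are pairwise orthogonal, the inequality from the telescoping identity $\sum_i E(\sigma_i)u_i=E(\sigma_1\cup\cdots\cup\sigma_N)u_1+\sum_l E(\sigma_{l+1}\cup\cdots\cup\sigma_N)(u_{l+1}-u_l)$ in which every value of $E$ is a projection. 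Taking $u_k=f(\omega_k)\xi$ over the arcs of a partition in cyclic order, and using that the sum of consecutive tag-distances is at most twice the length of $\T$, i.e.\ $4\pi$, yields the \emph{boundedness estimate} $\sum_k\|E(\tau_k)f(\omega_k)\xi\|^2\le K^2\|\xi\|^2$ with $K:=M+4\pi L$, and similarly for $g$. Applying the same estimate \emph{block by block} over the arcs of $P$, with $\sigma_j$ now the arcs of a refinement $R$ of $P$, with tags $b_j\in\sigma_j$, and with $u_j=[f(a_j)-f(b_j)]\xi$ where $a_j$ is the $P$-tag of the $P$-arc $\tau_k^P\supseteq\sigma_j$ (so $|a_j-b_j|\le|\tau_k^P|$ and the consecutive differences within a block telescope over total length $\le 2|\tau_k^P|$), I would get the \emph{smallness estimate}
\[
\sum_j\|E(\sigma_j)[f(a_j)-f(b_j)]\xi\|^2\le 9L^2\|\xi\|^2\sum_k|\tau_k^P|^2\le 18\pi L^2\delta\,\|\xi\|^2 ,
\]
$\delta$ being the mesh of $P$; the gain comes from $\sum_k|\tau_k^P|^2\le\delta\sum_k|\tau_k^P|=2\pi\delta$.

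To finish I would compare $S(P)$ with $S(R)$ for any tagged refinement $R$ of $P$. Splitting each $\tau_k^P$ into its $R$-subarcs writes $S(P)=\sum_j g(a_j)^*E(\sigma_j)f(a_j)$; adding and subtracting $\sum_j g(a_j)^*E(\sigma_j)f(b_j)$ gives $S(P)-S(R)=A+B$ with $A=\sum_j[g(a_j)-g(b_j)]^*E(\sigma_j)f(b_j)$ and $B=\sum_j g(a_j)^*E(\sigma_j)[f(a_j)-f(b_j)]$. Using $E(\sigma_j)=E(\sigma_j)^*E(\sigma_j)$, each of $A$ and $B$ factors as $\mathbf X^*\mathbf Y$ with $\mathbf X,\mathbf Y\colon\mathcal C\to\bigoplus_j H$ having coordinate maps of the form $\xi\mapsto E(\sigma_j)(\cdot)\xi$; by the two estimates above (the smallness estimate for one factor of each product, the boundedness estimate for the other---applied to $R$ for the factor built from $b_j$, and, for the factor $\xi\mapsto(E(\sigma_j)g(a_j)\xi)_j$, after regrouping over the arcs of $P$ using additivity of $E$), one factor has norm $\le K$ and the other has norm $\le 3L\sqrt{2\pi\delta}$. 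Hence $\|S(P)-S(R)\|\le\|A\|+\|B\|\le 6KL\sqrt{2\pi\delta}$, and passing to a common refinement shows that any two Riemann sums over partitions of mesh $<\delta$ differ in norm by at most $12KL\sqrt{2\pi\delta}$. The net is therefore Cauchy and converges in the Banach space $\mathcal L(\mathcal C)$.

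The step requiring genuine care --- and where crude bounds fail --- is the smallness estimate: a partition of small mesh may have arbitrarily many arcs, so a uniform per-arc bound of order $\delta$ need not sum to something small. It is exactly the Lipschitz hypothesis, channelled through the summation-by-parts identity, that replaces an uncontrolled number of small terms by a single telescoping sum whose length is governed by the total length of the arc in question, after which $\sum_k|\tau_k^P|^2\le 2\pi\delta$ provides the decisive factor $\sqrt\delta$; the same device, applied over the whole circle, is what makes the boundedness estimate hold with a constant independent of the partition.
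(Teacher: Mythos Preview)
Your proof is correct and follows essentially the same approach as the paper: both decompose $S(P)-S(R)$ into two cross terms, bound each as a product of a ``bounded'' factor (controlled by Abel summation over the full circle, giving $M+\mathrm{const}\cdot L$) and a ``small'' factor (controlled block-by-block using orthogonality of the $E(\sigma_j)$ and the key inequality $\sum_k|\tau_k|^2\le\delta\sum_k|\tau_k|$, giving $\mathrm{const}\cdot L\sqrt{\delta}$). Your packaging via $E(\sigma_j)=E(\sigma_j)^*E(\sigma_j)$ and Cauchy--Schwarz in $\bigoplus_j H$ is slightly more streamlined than the paper's direct operator-norm factorisation in its equation for $S(\tau,\xi)-S(\sigma,\eta)$, but the ingredients are identical.
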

\begin{proof}
Suppose that $||f(\omega)|| \leq M, \, ||g(\omega)||\leq M$ for $\omega\in\T$ and that$f,g$ satisfy a Lipschitz condition with constant $K>0$, in the sense that $||f(\omega_1)- f(\omega_2)|| \leq K d( \omega_1,\omega_2)$ where $d$ is the normalised arc length metric on $\T$. Let $\ep > 0$; we shall show that there is a partition $\tau=(\tau_1, \dots,\tau_n)$ of $\T$ such that the Riemann-Stieltjes sums approximating the integral (\ref{gEf}) corresponding to any pair of refinements of $\tau$ differ by at most $\ep$.    

Choose $\delta$ so that
\[
0 < \delta <  \left(\frac{\ep}{8K(M+K)}\right)^2.
\]
Pick a partition $\tau: \tau_1\cup\dots\cup\tau_n$ of $\T$ such that the normalised arc length $\delta_j$ of $\tau_j$ is at most $\delta$.  Corresponding to $\tau$ and a choice $\xi=(\xi_1,\dots,\xi_n)$ with $\xi_j\in\tau_j$ the approximating Riemann-Stieltjes sum to the integral (\ref{gEf}) is defined to be
\[
 S(\tau,\xi) = \sum_{j=1}^n g(\xi_j)^*E(\tau_j)f(\xi_j).
\]
We claim that, for any refinement $\sigma:\sigma_1\cup\dots\cup\sigma_m$ of $\tau$ and any choice of $\xi=(\xi_1,\dots,\xi_n), \eta=(\eta_1,\dots,\eta_m)$ with $\xi_j\in\tau_j, \eta_i\in\sigma_i$,
\[
 ||S(\tau,\xi) - S(\sigma,\eta)||_{\mathcal{L}(\mathcal{C})} < \frac{\ep}{2}.
\]
From this it will follow that $\tau$ has the claimed property,  hence that the net $S(\tau,\xi)$ is Cauchy with respect to the operator norm on the complete space $\mathcal{L}(\mathcal{C})$ and hence that the integral (\ref{gEf}) converges.

For $1 \leq i\leq m$ let $i'$ denote the index $j\in\{1,\dots, n\}$ such that $\sigma_i \subset \tau_j$.  Then, since $E(\tau_j) = \sum_{\sigma_i \subset \tau_j} E(\sigma_i)$,
\begin{eqnarray}\label{diffS}
S(\tau,\xi) - S(\sigma,\eta) &=& \sum_{j=1}^n g(\xi_j)^*E(\tau_j)f(\xi_j) -\sum_{i=1}^m g(\eta_i)^*E(\sigma_i)f(\eta_i)\nn \\
   &=&  \sum_{i=1}^m  \{g(\xi_{i'})^* E(\sigma_i) f(\xi_{i'})- g(\eta_i)^*E(\sigma_i)f(\eta_i)\} \nn\\
  &=& \sum_{i=1}^m \left\{ g(\xi_{i'})^*E(\sigma_i)[f(\xi_{i'}) - f(\eta_i)] + [ g(\xi_{i'})-g(\eta_i)]^*E(\sigma_i)f(\eta_i)\right\}\nn\\
  &=& \left(\sum_{i=1}^m  g(\xi_{i'})^*E(\sigma_i)\right) \left(\sum_{i=1}^m E(\sigma_i)[f(\xi_{i'}) - f(\eta_i)]\right) + \nn\\
 & & \qquad \left(\sum_{i=1}^m  [g(\xi_{i'}-g(\eta_i)]^*E(\sigma_i)\right)\left(\sum_{i=1}^m E(\sigma_i)f(\eta_i)\right),
\end{eqnarray}
the last step because $E(\sigma_i)E(\sigma_j) =0$ when $i\neq j$.  We shall estimate the norms of the four operators in round brackets with the aid of the partial summation formula.
 If $X_1,\dots,X_m \in \mathcal{L}(\mathcal{C},H)$ then
\begin{eqnarray*}
|| \sum_{i=1}^m E(\sigma_i)X_i || &=& || X_1 + \sum_{j=2}^m E(\sigma_j\cup\dots\cup\sigma_m)(X_j-X_{j-1})|| \\
 &\leq& ||X_1|| +  \sum_{j=2}^m ||X_j-X_{j-1}||.
\end{eqnarray*}
Hence, if we arrange the $\xi_j, \eta_i$ in order of increasing arguments, we have
\begin{eqnarray}\label{in0}
||\sum_{i=1}^m g(\xi_{i'})^*E(\sigma_i)|| &=& ||\sum_{j=1}^n g(\xi_j)^*E(\tau_j)|| \nn\\
 &\leq&  ||g(\xi_{1})|| +  \sum_{j=2}^n || g(\xi_{j}) - g(\xi_{j-1})|| \nn\\
 &\leq& M+K \sum_{j=2}^n d(\xi_j,\xi_{j-1}) \nn\\
 &\leq& M+K.
\end{eqnarray}
Moreover, if $\sigma_i\subset\tau_j$ precisely when $\ell(j)\leq i\leq u(j)$, then
\begin{eqnarray}\label{in1}
 \sum_{i=1}^m E(\sigma_i)[f(\xi_{i'}) - f(\eta_i)] &=& \sum_{j=1}^n \sum_{i=\ell(j)}^{u(j)}E(\sigma_i)[f(\xi_{j}) - f(\eta_i)] \nn\\
  &=& \sum_{j=1}^n  E(\tau_j)[f(\xi_j)-f(\eta_{\ell(j)})] + \nn\\
  \sum_{j=1}^n \sum_{i=\ell(j)+1}^{u(j)} & &E(\sigma_i\cup\dots\cup\sigma_{u(j)})[f(\eta_{i-1}) - f(\eta_i)] \}
\end{eqnarray}
Since the operators $E(\tau_j)[f(\xi_j)-f(\eta_{\ell(j)})]$ have pairwise orthogonal ranges, we have
\begin{eqnarray} \label{in2}
 || \sum_j E(\tau_j)[f(\xi_j)-f(\eta_{\ell(j)})]||^2  & \leq &  \sum_j ||E(\tau_j)[f(\xi_j)-f(\eta_{\ell(j)})]||^2 \nn\\
  & \leq & \sum_j K^2 d(\xi_j,\eta_{\ell(j)})^2 \leq K^2 \sum_j \delta_j^2\nn\\
  &\leq & K^2\delta\sum_j \delta_j \leq K^2\delta.
\end{eqnarray}
Now
\begin{eqnarray*}
 ||\sum_{i=\ell(j)+1}^{u(j)}E(\sigma_i\cup\dots\cup\sigma_{u(j)})[f(\eta_{i-1}) - f(\eta_i)]|| &\leq & \sum_{i=\ell(j)+1}^{u(j)}||f(\eta_{i-1}) - f(\eta_i)|| \\
  &\leq & K \sum_{i=\ell(j)+1}^{u(j)} d(\eta_{i-1}, \eta_i) \leq K\delta_j,
\end{eqnarray*}
and again by orthogonality of ranges,
\begin{eqnarray} \label{in3}
 & & ||\sum_{j=1}^n \sum_{i=\ell(j)+1}^{u(j)}E(\sigma_i\cup\dots\cup\sigma_{u(j)})[f(\eta_{i-1}) - f(\eta_i)]||^2 \nn\\
&\leq &\sum_{j=1}^n ||\sum_{i=\ell(j)+1}^{u(j)}E(\sigma_i\cup\dots\cup\sigma_{u(j)})[f(\eta_{i-1}) - f(\eta_i)]||^2 \nn\\
 &\leq & \sum_{j=1}^n \left(\sum_{i=\ell(j)+1}^{u(j)} ||f(\eta_{i-1}) - f(\eta_i)||\right)^2\nn \\
  &\leq& \sum_{j=1}^n \left(\sum_{i=\ell(j)+1}^{u(j)} Kd( \eta_{i-1},\eta_i)  \right)^2\nn \\
  &\leq & \sum_{j=1}^n K^2 \delta_j^2 \leq K^2\delta\sum_{j=1}^n \delta_j \nn\\
 &\leq& K^2\delta.
\end{eqnarray}
On combining equation (\ref{in1}) with inequalities (\ref{in2}) and (\ref{in3})  we find
\[
 ||\sum_{i=1}^m E(\sigma_i)[f(\xi_{i'} - f(\eta_i)]||  \leq 2K\sqrt{\delta}.
\]
Putting this relation together with inequality (\ref{in0}) we have
\[
 ||\sum_{i=1}^m  g(\xi_{i'})^*E(\sigma_i)\cdot \sum_{i=1}^m E(\sigma_i)[f(\xi_{i'}) - f(\eta_i)]|| \leq (M+K)\cdot 2K\sqrt{\delta}.
\]
The same estimate applies to the second term on the right hand side of equation (\ref{diffS}),
and hence
\begin{eqnarray*}
||S(\tau,\xi) - S(\sigma,\eta)|| & \leq & 4K(M+K)\sqrt{\delta} \\
 & < \frac{\ep}{2}
\end{eqnarray*}
as claimed.  Hence the integral (\ref{gEf}) converges in norm.  \qed
\end{proof}
\noindent {\bf Remark} (i)  The proof shows that if $\mathcal{F}$ is a family of functions from $\T$ to  $\mathcal{L}(\mathcal{C},H)$ that is uniformly bounded in Lipschitz norm then the integral (\ref{gEf}) converges in norm {\em uniformly} for $f,g\in\mathcal{F}$.\\
(ii) Mere continuity of $f$ and $g$ does not suffice for the convergence of the integral (\ref{gEf}).  Let $\mathcal{C}=\C$, let $H=L^2(0,2\pi)$ and let $E(\delta)$ be the operation of multiplication by the characteristic function $\chi_\delta$ of $\delta$ for any measurable $\delta\subset\T$.  Let $f:\T\to H$ be defined by
\[
 f(\mathrm{e}^{i\theta}) = \chi_{(0,\theta)}, \quad 0 < \theta < 2\pi.
\]
Then $f$ is continuous but the integral
\[
 \int_\T \langle E(d\omega)f(\omega), f(\omega) \rangle
\]
diverges.  Indeed, if $\tau$ is any partition of $\T$ and $0 < t < 2\pi$ then there exists a choice of $\kappa_j\in\tau_j$ such that $S(\tau,\kappa) = t$.  For example, if $\kappa_j$ is taken to be the mid-point of $\tau_j$ then $S(\tau,\kappa) = \pi.$
\begin{proposition}\label{causch}
If  $H$  is a separable Hilbert space, $E$ is an $\mathcal{L}(H)$-valued spectral measure on $\T$  and $f,\, g:\T \to H$ satisfy uniform Lipschitz conditions then, for any interval $J\subset \T$ ,
\[
\left| \int_J  \langle E(d\omega) f(\omega),g(\omega) \rangle  \right| \leq  
  \left\{\int_J \langle E(d\omega) f(\omega), f(\omega) \rangle \right\}^{\tfrac 12}
 \left\{\int_J \langle E(d\omega) g(\omega), g(\omega) \rangle \right\}^{\tfrac 12}.
 \]
\end{proposition}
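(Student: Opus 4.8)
The plan is to run the classical quadratic-form argument for the Cauchy--Schwarz inequality, with the role of the scalars played by the Riemann--Stieltjes integrals in the statement; the only real work is to verify that the integrals involved all converge and to keep the conjugate-linear slot straight.

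First I would record that, for $f$ as in the statement, the integral $\int_J \langle E(d\omega) f(\omega), f(\omega)\rangle$ is a well-defined \emph{non-negative real} number. Convergence follows from (the proof of) Proposition~\ref{integral}, applied with $\mathcal{C}$ taken to be $\C$ --- an operator $\C\to H$ being identified with a vector of $H$ --- and with partitions of the arc $J$ in place of $\T$, the Cauchy-criterion estimate there going through verbatim for any subarc; and each approximating sum $\sum_j \langle E(\tau_j) f(\xi_j), f(\xi_j)\rangle = \sum_j \| E(\tau_j) f(\xi_j)\|^2$ is $\ge 0$. Next, for uniformly Lipschitz $f,g:\T\to H$ and any $\la\in\C$ the function $f+\la g$ is again uniformly Lipschitz, and, since each $E(\tau_j)$ is a self-adjoint projection, a term-by-term expansion of the approximating sums for $\int_J \langle E(d\omega)(f+\la g),(f+\la g)\rangle$ gives, on passing to the limit,
\[
\int_J \langle E(d\omega)(f+\la g)(\omega),(f+\la g)(\omega)\rangle \;=\; A + 2\,\mathrm{Re}(\la B) + |\la|^2 C,
\]
where $A = \int_J\langle E(d\omega)f,f\rangle \ge 0$, $C = \int_J\langle E(d\omega)g,g\rangle\ge 0$ and $B = \int_J\langle E(d\omega)g,f\rangle$; each of the three resulting sums converges by Proposition~\ref{integral}, which legitimises the passage to the limit. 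The same term-by-term conjugation shows $\int_J\langle E(d\omega)f,g\rangle = \bar B$, so the left-hand side of the asserted inequality is exactly $|B|$.

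It then remains to extract $|B|^2 \le AC$ from the non-negativity, for every $\la\in\C$, of the displayed expression. This is the familiar quadratic-form criterion: if $B\ne 0$, take $\la = -t\bar B/|B|$ with $t\in\R$, so the expression becomes $A - 2t|B| + t^2 C \ge 0$ for all real $t$; the case $C=0$ would force $|B|=0$, a contradiction, so $C>0$, and minimising the parabola at $t=|B|/C$ gives $A - |B|^2/C \ge 0$, i.e.\ $|B|^2 \le AC$; the case $B=0$ is trivial. Taking square roots of $|B|^2 \le AC$ is precisely the claimed inequality.

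The argument is essentially routine, and there is no genuinely hard step. The only points needing attention are the legitimacy of invoking Proposition~\ref{integral} --- for the perturbed integrand $f+\la g$ and for integration over a subarc $J$ rather than over $\T$, both of which are immediate from the proof of that proposition --- and the care with complex conjugation needed to identify the estimated quantity correctly as $|B|$.
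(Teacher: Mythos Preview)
Your proof is correct, but it follows a different route from the paper's. The paper works entirely at the level of the approximating Riemann--Stieltjes sums: for a partition $\tau$ with sample points $\kappa_j$ it bounds
\[
|S(\tau,\kappa)| \le \sum_j \|E(\tau_j)f(\kappa_j)\|\,\|E(\tau_j)g(\kappa_j)\|
\le \Big(\sum_j \|E(\tau_j)f(\kappa_j)\|^2\Big)^{1/2}\Big(\sum_j \|E(\tau_j)g(\kappa_j)\|^2\Big)^{1/2}
\]
by the ordinary Cauchy--Schwarz inequality in $H$ followed by the discrete Cauchy--Schwarz inequality, and then passes to the limit, invoking Proposition~\ref{integral} only at the end to guarantee that the three limits exist. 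You instead establish convergence of the integrals first and then run the sesquilinear-form discriminant argument $A+2\,\mathrm{Re}(\la B)+|\la|^2C\ge 0$ directly at the integral level. Both arguments are classical and elementary; the paper's avoids having to expand and recombine integrals but uses two Cauchy--Schwarz steps, while yours needs only the positivity of the diagonal integrals and a single quadratic minimisation.
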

\begin{proof}
Corresponding to a partition $\tau=\{\tau_1, \dots, \tau_N\}$ of $J$ and a choice of points
$\kappa_j\in\tau_j$, define the approximating Riemann-Stieltjes sum
\[
S(\tau,\kappa) = \sum_{j=1}^N \langle E(\tau_j)f(\kappa_j), g(\kappa_j) \rangle
\]
to the integral on the left hand side.  We have
\begin{eqnarray*}
|S(\tau,\kappa)| &\leq& \sum_j | \langle E(\tau_j)f(\kappa_j), g(\kappa_j) \rangle | \\
	&\leq &  \sum_j ||E(\tau_j)f(\kappa_j)|| \cdot ||E(\tau_j)g(\kappa_j)||\\
        &\leq & \left\{\sum_j ||E(\tau_j)f(\kappa_j)||^2\right\}^{\tfrac 12} \left\{\sum_j ||E(\tau_j)g(\kappa_j)||^2\right\}^{\tfrac 12} \\
	&=&  \left\{\sum_j \langle E(\tau_j)f(\kappa_j), f(\kappa_j) \rangle\right\}^{\tfrac 12} \left\{\sum_j  \langle E(\tau_j)g(\kappa_j), g(\kappa_j) \right\}^{\tfrac 12}.
\end{eqnarray*}
On taking limits with respect to refinement of the partition $\tau$ we obtain the inequality in the lemma whenever the integrals in question exist. In particular, by Proposition \ref{integral} specialised to $\mathcal{C}=\C$, this is so when $f,g$ satisfy uniform Lipschitz conditions.
\qed
\end{proof}

 The variety $\{ (2\la,\la^2): \la \in \C \}$ will play a special role: we call it the
{\em royal variety} and denote it by $\mathcal V$.

\noindent {\bf Proof of Theorem \ref{exrays}.}  Consider a function $h$ on $G \times \bar G$ of the form
$$
h = f^\vee \cdot (1 - \Phi^\vee_\omega \Phi_\omega) \cdot f
$$
where $f \in \Hol G$ and $\omega \in \T$.   This function $h$ is analytic on $G \times \bar G$, and indeed $h \in \Hered G$:  if $T$ is a commuting pair
of operators such that $\sigma(T) \subset G$  and $G$ is a spectral domain for $T$,
then since $\Phi_\omega$ is bounded by $1$ on $G$ we have $||\Phi_\omega(T)|| \le 1$, and hence
$$
h(T) = f(T)^*(1- \Phi_\omega(T)^*\Phi_\omega(T))f(T) \ge 0.
$$
We shall show that $h$ lies on an extreme ray of $\Hered G$.
Suppose that $h=h_1+h_2$ where $h_1,h_2 \in  \Hered G$, so that,
for all $\la, \mu \in G$,
\begin{equation}
\label{relh}
\bar f(\mu) (1- \overline{\Phi_\omega(\mu)}\Phi_\omega(\la))f(\la) = h_1(\la, \bar\mu) + h_2(\la, \bar\mu).
\end{equation}
Restrict this relation to the royal variety:  if $\la= (2z, z^2)$ then
$$
\Phi_\omega(\la) =\frac{2\omega z^2 - 2z}{2-\omega 2z} = -z,
$$
and equation (\ref{relh}) yields for all $z,w \in \D$ and $\la=(2z,z^2), \
\mu=(2w, w^2),$
$$
\bar f(\mu) f(\la) = \frac{ h_1(\la, \bar\mu)}{1- \bar w z} +\frac{ h_2(\la, \bar\mu)}{1 - \bar w z}.
$$
The left hand side is a rank $1$ positive kernel on $\D$, hence lies on an extreme ray of $\mathcal{P}(\D)$.  The summands on the right hand side also belong to $\mathcal{P}(\D)$ and are thus non-negative constant multiples of $f^\vee f$.  Consequently there exists $c \in (0,1)$ such that
\begin{equation}
\label{hV}
h_1 = ch \qquad \mbox{ on     } \qquad\mathcal{V} \times \bar {\mathcal{V}}.
\end{equation}
Now consider the restriction of equation (\ref{relh}) to $F_\beta \times \bar F_\beta$ where $\beta \in \D$ and
$$
F_\beta = \{ (\beta z + \bar \beta, z) : z \in \D \}.
$$
Note that, by Theorem \ref{Ggeos}, the $F_\beta$ foliate $G$.  Furthermore
$$
\Phi_\omega(\beta z + \bar \beta, z) = \frac{2\omega z - \beta z - \bar\beta}{2 - \omega(\beta z + \bar \beta)} = \tau B_\al(z)
$$
where $\al = \bar\beta/(2\omega - \beta) \in \D$,
$$
\tau = \frac{2\omega -\beta}{2 - \omega\bar\beta} \in \T \mbox{ and }
B_\al(z) = \frac{z-\al}{1-\bar\al z}.
$$
 Hence, if $\la=(\beta z+\bar \beta,z), \ \mu=(\beta w+\bar\beta, w)$ with
$z,w \in \D$, equation (\ref{relh}) yields
$$
\bar f(\mu)\frac{1-\bar B_\al(\mu) B_\al(\la)}{1- \bar w z} f(\la) =
 \frac{ h_1(\la, \bar\mu)}{1- \bar w z} +\frac{ h_2(\la, \bar\mu)}{1 - \bar w z}.
$$
The left hand side is a rank $1$ positive kernel on $F_\beta$, and the summands on the right hand side are also positive kernels on $F_\beta$.  Hence there exists $t_\beta \in [0,1]$ such that
\begin{equation}
\label{hFb}
h_1 = t_\beta h \qquad \mbox{  on   } F_\beta \times \bar F_\beta.
\end{equation}
By Theorem \ref{Ggeos}, $F_\beta$ meets $\mathcal V$, and comparison of
equations (\ref{hV}) and (\ref{hFb}) now shows that $c=t_\beta$ for all $\beta \in \D$.
Hence $h_1 = ch$ on every $F_\beta \times \bar F_\beta$.  In particular,
$$
(h_1 -ch)(\la,\bar\la) = 0 \qquad \mbox{  for all   } \la \in G.
$$
It follows that all the Taylor coefficients of $h_1 - ch$ at zero vanish.  Thus $h_1 =ch$
on an open set, and so on all of $G\times\bar G$.  We have shown that if $h=h_1+h_2$ with $h_1, h_2 \in \Hered G$, then $h_1$ is a constant multiple of $h$.
That is, $h$ lies on an extreme ray of $\Hered G$.

We now turn to the proof of necessity in Theorem \ref{exrays}.  Let $h$ lie on an extreme ray of $\Hered  G$.  We have $h\neq 0$.  By Theorem \ref{realn} there exist a Hilbert space $H$, an $\mathcal{L}(H)$-valued spectral measure $E$ on $\mathbb{T}$  and a continuous function $u:\mathbb{T} \times G \to H$ such that 
 (i) and (ii) hold.    Let $J\subset \T$ be an interval.
 From property (ii) we have 
 \begin{equation}\label{splith}
 h =  \int_J + \int_{\T\setminus J}\left(1-\Phi^\vee_\omega \Phi_\omega\right) \langle E(d\omega) u_\omega, u_\omega\rangle,
 \end{equation}
 where $u_\omega(\lambda)$ denotes $u(\omega, \lambda)$.  This formula expresses $h$ as a sum of two elements of the cone $\Hered  G$, and since $h$ is supposed extremal, there exists $\nu(J) \geq 0$ such that
 \begin{equation}\label{defnu}
 \int_J \left(1 - \Phi_\omega^\vee \Phi_\omega\right) \langle E(d\omega )u_\omega, u_\omega\rangle  = \nu(J) h.
 \end{equation}
It is clear that $\nu$ is a countably additive set function on the intervals in $\T$, and so $\nu$ extends to a Borel probability measure on $\T$.

 For $\la \in G$ let 
\[
M(\la) =  \inf_{\omega\in\T} (1-|\Phi(\omega,\la)|^2).
\]
  Since $\Phi(.,\la)$ maps the closed unit disc into $\D$ we have $M(\la) > 0$.   From equation (\ref{defnu}) we have, for any interval $J$,
\begin{eqnarray*}
\nu(J)h(\la,\bar\la) &=&\int_J(1-|\Phi_\omega(\la)|^2) \langle E(d\omega) u(\omega,\la), u(\omega,\la) \rangle \\
   & \geq & M(\la) \int_J \langle E(d\omega) u(\omega,\la), u(\omega,\la) \rangle.
\end{eqnarray*}
On combining this inequality with Lemma \ref{causch} we find that, for any $\la,\mu \in G$ and any uniformly Lipschitz scalar function $\ph$ on $\T$,
\begin{eqnarray} \label{estE}
\left| \int_J \langle \ph(\omega) E(d\omega)u(\omega,\la), u(\omega,\mu) \rangle \right|^2 &\leq &
 \int_J |\ph(\omega)|^2 \langle E(d\omega)u(\omega,\la), u(\omega,\la) \rangle \cdot  \nn \\
  & & \qquad \int_J \langle E(d\omega)u(\omega,\mu), u(\omega,\mu) \rangle  \nn\\
   &\leq&  \sup_{\omega\in J}|\ph(\omega)|^2 \nu(J)^2 \frac {h(\la,\bar\la)h(\mu,\bar\mu)}{M(\la)M(\mu)}.
\end{eqnarray}

 Let $\omega_0$ be any point of the closed support of $\nu$ and let $\mathcal{J}$ denote the set of open intervals in $\T$ that contain $\omega_0$; thus $\nu(J) > 0$ for every $J\in\mathcal{J}$.  
 We claim that $h/(1-\Phi_{\omega_0}^\vee \Phi_{\omega_0})$ is positive semi-definite on $G$.  To see this fix $\lambda_1, \dots, \lambda_m\in G$ and $c_1, \dots, c_m \in \mathbb{C}$.   By equation (\ref{defnu}), for any $J\in\mathcal{J}$,
\begin{eqnarray*}
\lefteqn{\sum\limits_{i, j}\ \frac{h(\lambda_j, \bar \lambda_i)}{1 - \overline{\Phi_{\omega_0}(\lambda_i)} \Phi_{\omega_0}(\lambda_j)}\ \bar c_i c_j =} \\
  & &  \sum\limits_{i, j} \frac{\bar c_i c_j}{\nu(J)} \int_J \frac{1 - \overline{\Phi_{\omega}(\lambda_i)} \Phi_{\omega}(\lambda_j)}{1 - \overline{\Phi_{\omega_0}(\lambda_i)} \Phi_{\omega_0}(\lambda_j)}\ \langle E(d\omega) u(\omega,\la_j),u(\omega,\la_i )\rangle 
\end{eqnarray*}
and hence
\begin{eqnarray} \label{error}
\lefteqn{\left|\sum\limits_{i, j}\ \frac{h(\lambda_j, \bar \lambda_i)}{1 - \overline{\Phi_{\omega_0}(\lambda_i)} \Phi_{\omega_0}(\lambda_j)}\ \bar c_i c_j  - \frac{1}{\nu(J)} \int_J \langle E(d\omega) \sum_j c_j u(\omega, \la_j), \sum_i c_i u(\omega,\la_i) \rangle \right| =}   \nn\\
 & &  \left| \frac{1}{\nu(J)}\int_J \sum\limits_{i, j}  \left\{\frac{1 - \overline{\Phi_{\omega}(\lambda_i)} \Phi_{\omega}(\lambda_j)}{1 - \overline{\Phi_{\omega_0}(\lambda_i)} \Phi_{\omega_0}(\lambda_j)} -1 \right\}\bar c_i c_j  \langle E(d\omega) u(\omega,\la_j),u(\omega,\la_i) \rangle \right| \leq \nn \\
 & &  \frac{1}{\nu(J)} \sum\limits_{i, j} \left|\int_J  \left\{\frac{1 - \overline{\Phi_{\omega}(\lambda_i)} \Phi_{\omega}(\lambda_j)}{1 - \overline{\Phi_{\omega_0}(\lambda_i)} \Phi_{\omega_0}(\lambda_j)} -1 \right\}\bar c_i c_j  \langle E(d\omega) u(\omega,\la_j),u(\omega,\la_i) \rangle \right| \leq \nn \\
 & &  \sum\limits_{i, j} \sup_{\omega\in J} \left|\frac{1 - \overline{\Phi_{\omega}(\lambda_i)} \Phi_{\omega}(\lambda_j)}{1 - \overline{\Phi_{\omega_0}(\lambda_i)} \Phi_{\omega_0}(\lambda_j)} -1 \right| |c_ic_j| \left\{\frac{h(\la_j,\bar\la_j)h(\la_i,\bar\la_i)}{M(\la_j) M(\la_i)}\right\}^{\tfrac 12},
\end{eqnarray}
the last inequality by virtue of (\ref{estE}).

Let $\ep > 0$ and let 
\[
M= \max_{1\leq j\leq m} |c_j|^2\frac{h(\la_j, \bar\la_j)}{M(\la_j)}.
\]
 Choose $J\in\mathcal{J}$ so small that for all $\omega\in J$
\[
 \left|\frac{1 - \overline{\Phi_{\omega}(\lambda_i)} \Phi_{\omega}(\lambda_j)}{1 - \overline{\Phi_{\omega_0}(\lambda_i)} \Phi_{\omega_0}(\lambda_j)} -1 \right| < \frac{\ep}{m^2M}, \quad 1\leq i,j \leq m.
\]
By inequality (\ref{error}) we have
\[
\left| \sum\limits_{i, j}\ \frac{h(\lambda_j, \bar \lambda_i)}{1 - \overline{\Phi_{\omega_0}(\lambda_i)} \Phi_{\omega_0}(\lambda_j)}\ \bar c_i c_j  - A \right| < \ep
\]
where
\[
 A = \frac{1}{\nu(J)} \int_J \langle E(d\omega) \sum_j c_j u(\omega, \la_j), \sum_i c_i u(\omega,\la_i) \rangle  \geq 0.
\]
It follows that
\[
\sum\limits_{i, j}\ \frac{h(\lambda_j, \bar \lambda_i)}{1 - \overline{\Phi_{\omega_0}(\lambda_i)} \Phi_{\omega_0}(\lambda_j)}\ \bar c_i c_j  \geq 0.
\]
Thus $h\left/\left(1-\Phi^\vee_{\omega_0} \Phi_{\omega_0}\right)\right.$ is positive semi-definite on $G$ as claimed.  Since in addition $h$ lies on an extreme ray of $\Hered  G$, it follows that $h\left/\left(1-\Phi^\vee_{\omega_0} \Phi_{\omega_0}\right)\right.$ lies on an extreme ray of $\mathcal{P} (G)$.  Hence, by Proposition \ref{exposdef}, there is an analytic function $f: G\to \mathbb{C}$ such that  
$$
h=f^\vee\cdot \left(1-\Phi^\vee_{\omega_0} \Phi_{\omega_0}\right) \cdot f.
$$
  Thus necessity holds in Theorem \ref{exrays}. \qed

\begin{corollary}
The magic functions of $G$ are the functions  $m \circ \Phi_\omega$ where $m\in \Aut \D$ and $\omega\in\T$.  
\end{corollary}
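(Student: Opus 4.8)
The plan is to read both inclusions straight off Theorem \ref{exrays}, using only that $f$ is a magic function of $G$ exactly when $1-f^\vee f$ is a nonzero extreme direction of $\Hered G$.

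For the containment ``$\supseteq$'' I would first observe that $\Phi_\omega$ is itself magic: applying Theorem \ref{exrays} with the constant function $1\in\Hol G$ shows that $1-\Phi_\omega^\vee\Phi_\omega$ lies on an extreme ray of $\Hered G$, so $\Phi_\omega$ is magic by Definition \ref{hcone}. Then the last proposition of Section \ref{hercone} (post-composition of a magic function with a M\"obius function is again magic) gives that $m\circ\Phi_\omega$ is magic for every $m\in\Aut\D$.

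For the converse, let $f$ be magic. By Theorem \ref{exrays} there are $\omega\in\T$ and $g\in\Hol G$ with $1-f^\vee f=g^\vee\cdot(1-\Phi_\omega^\vee\Phi_\omega)\cdot g$, and $g\not\equiv 0$, for otherwise $1-f^\vee f\equiv 0$ is not an extreme direction. Evaluating this identity of hereditary functions at $(\la,\bar\mu)$ and rearranging, one obtains, for all $\la,\mu\in G$,
\[
1+\overline{(g\Phi_\omega)(\mu)}\,(g\Phi_\omega)(\la)\;=\;\overline{f(\mu)}f(\la)+\overline{g(\mu)}g(\la),
\]
which says that the $\C^2$-valued maps $V(\la)=\bigl(1,(g\Phi_\omega)(\la)\bigr)$ and $W(\la)=\bigl(f(\la),g(\la)\bigr)$ on $G$ have equal Gram kernels. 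Since $\Phi_\omega(0,0)=0$, the function $g\Phi_\omega$ cannot be a nonzero constant, and it is not identically $0$ because $g\not\equiv 0$; hence $\{V(\la):\la\in G\}$ spans $\C^2$. The usual polarisation identity then shows that $V(\la)\mapsto W(\la)$ extends to a unitary $U$ on $\C^2$ (this is the uniqueness half of Proposition \ref{exposdef}, applied in $\C^2$). Writing $a,b,c,d$ for the entries of $U$, we get $f=a+b\,g\Phi_\omega$ and $g=c+d\,g\Phi_\omega$.

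It then remains to read the shape of $f$ off $U$. From $g(1-d\Phi_\omega)=c$ and $1-d\Phi_\omega\not\equiv 0$ we get $c\ne 0$ (else $g\equiv 0$), so $g=c/(1-d\Phi_\omega)$; and if $b=0$ then unitarity forces $|a|=1$, so $f\equiv a$ and $1-f^\vee f\equiv 0$, impossible, hence $b\ne 0$ and $|d|^2=1-|b|^2<1$. Substituting,
\[
f=\frac{a+(bc-ad)\Phi_\omega}{1-d\Phi_\omega}=\frac{a-(\det U)\Phi_\omega}{1-d\Phi_\omega},
\]
and the unitarity relations $|b|^2+|d|^2=1$, $\bar ab+\bar cd=0$ yield $a=\bar d\,(\det U)$, so that $f=(-\det U)\,(\Phi_\omega-\bar d)/(1-d\Phi_\omega)$, which is a disc automorphism of $\Phi_\omega$ since $|\det U|=1$ and $|\bar d|<1$. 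Thus $f=m\circ\Phi_\omega$ for some $m\in\Aut\D$ and $\omega\in\T$, completing the proof. I expect the only genuinely non-formal steps to be the identification of the unitary $U$ and the short matrix computation showing $(-\det U)(\zeta-\bar d)/(1-d\zeta)$ is a M\"obius function; everything else is checking that each degenerate case ($g\equiv 0$, $g\Phi_\omega$ constant, $c=0$, $b=0$) contradicts $1-f^\vee f$ being a nonzero extreme direction.
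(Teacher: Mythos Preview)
Your proof is correct. The paper states this corollary without proof, evidently regarding it as immediate from Theorem \ref{exrays} together with the closing proposition of Section \ref{hercone}; you have supplied the missing ``$\subseteq$'' direction via the standard lurking-isometry argument, and every step checks out (the degenerate cases $g\equiv 0$, $c=0$, $b=0$ are handled correctly, and the identity $a=\bar d\det U$ follows from $U^*=(\det U)^{-1}\mathrm{adj}\,U$ for a $2\times 2$ unitary).

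One small remark: your parenthetical pointer to Proposition \ref{exposdef} for the existence of $U$ is slightly off, since that proposition concerns factoring a single positive kernel rather than comparing two equal Gram kernels; but the fact you actually use---that equal Gram kernels on a spanning set are intertwined by a unitary---is elementary linear algebra in $\C^2$, and your argument is self-contained without the citation.
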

Corollary \ref{Minvar} immediately yields the following invariance property.
\begin{corollary}\label{invar}
 The collection of functions 
 $$
 m\circ\Phi_\omega : G\to \mathbb{D},
$$
 where  $m\in\Aut\D$ and $\omega \in \mathbb{T}$,  is invariant under composition with automorphisms of $G$.
\end{corollary}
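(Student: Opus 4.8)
The plan is to deduce the statement directly from two facts already in hand: the preceding corollary, which identifies the magic functions of $G$ as exactly the functions $m\circ\Phi_\omega$ with $m\in\Aut\D$ and $\omega\in\T$, and Corollary \ref{Minvar}, which asserts that isomorphisms of domains carry magic functions to magic functions. Write $\mathcal{M}$ for the collection of functions $m\circ\Phi_\omega:G\to\D$ appearing in the statement; by the preceding corollary $\mathcal{M}$ is precisely the set of magic functions of $G$.

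Now fix $\alpha\in\Aut G$. Since an automorphism of $G$ is in particular an isomorphism of $G$ onto itself, Corollary \ref{Minvar} applied with $\Omega_1=\Omega_2=G$ shows that $f\circ\alpha$ is a magic function of $G$ whenever $f$ is. Applying the preceding corollary again, $f\circ\alpha\in\mathcal{M}$ for every $f\in\mathcal{M}$; that is, composition with $\alpha$ maps $\mathcal{M}$ into $\mathcal{M}$.

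Finally, to obtain genuine invariance rather than mere inclusion, apply the last paragraph to $\alpha^{-1}\in\Aut G$ as well: composition with $\alpha^{-1}$ also maps $\mathcal{M}$ into $\mathcal{M}$. Since $f\mapsto f\circ\alpha$ and $g\mapsto g\circ\alpha^{-1}$ are mutually inverse bijections on the set of all functions on $G$, it follows that $f\mapsto f\circ\alpha$ restricts to a bijection of $\mathcal{M}$ onto itself, so $\mathcal{M}$ is invariant under composition with automorphisms of $G$. There is essentially no obstacle here, since all the work is contained in Theorem \ref{exrays} and Corollary \ref{Minvar}; the only point deserving a word of care is the upgrade from ``maps into'' to ``maps onto,'' which is settled by the one-line argument using $\alpha^{-1}$.
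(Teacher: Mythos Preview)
Your argument is correct and follows the same route as the paper: the corollary is an immediate consequence of the identification of the magic functions of $G$ (the preceding corollary) together with Corollary~\ref{Minvar}. Your additional remark using $\alpha^{-1}$ to upgrade inclusion to a bijection is a harmless extra that the paper leaves implicit.
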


 \section{The automorphisms of $G$} \label{autG}
 In this section we prove that all automorphisms of $G$ are induced by M\"obius functions.
Note that any $m \in \Aut \mathbb{D}$ induces an automorphism $\tau(m)$ of $G$ by
$$
\tau(m) (z_1+z_2, z_1z_2) = (m(z_1) +m(z_2), m(z_1) m(z_2)),  \quad z_1, z_2 \in \mathbb{D}.
$$
\begin{theorem}\label{autos}
$\tau: \Aut \mathbb{D} \to \Aut G$ is an isomorphism of groups.
\end{theorem}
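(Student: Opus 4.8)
The plan is to verify that $\tau$ is an injective homomorphism, which is routine, and then to deduce surjectivity by using the magic functions to transfer an arbitrary automorphism of $G$ to an automorphism of the disc. That $\tau$ is a group homomorphism is immediate from its definition, and it is injective because $\tau(m)=\mathrm{id}_G$, restricted to the subvariety $\{(2z,z^2):z\in\D\}$, gives $2m(z)=2z$ for all $z\in\D$, so $m=\mathrm{id}$. Hence it remains to prove $\tau$ surjective.

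Fix $F\in\Aut G$. By Corollary \ref{invar}, for each $\omega\in\T$ the function $\Phi_\omega\circ F$ is again of the form $m\circ\Phi_{\omega'}$, so we may write
\[
\Phi_\omega\circ F = m_\omega\circ\Phi_{\rho(\omega)},\qquad m_\omega\in\Aut\D,\ \rho(\omega)\in\T.
\]
This presentation is unique: evaluating at $(0,0)$ (where every $\Phi_\omega$ vanishes) and at its first-order jet (where $d\Phi_\omega|_{(0,0)}=(-\tfrac12,\omega)$ genuinely depends on $\omega$), a coincidence $m_1\circ\Phi_{\omega_1}=m_2\circ\Phi_{\omega_2}$ forces $\omega_1=\omega_2$ and then $m_1=m_2$. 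Reading $m_\omega$ and $\rho(\omega)$ off the $1$-jet of $\Phi_\omega\circ F$ at $(0,0)$ shows that $\omega\mapsto m_\omega$ and $\rho$ are continuous; applying the same construction to $F^{-1}$ shows that $\rho$ is a bijection, hence a homeomorphism, of $\T$.

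The core of the proof — and the step I expect to be the main obstacle — is to show that $F$ carries the royal variety $\mathcal R:=\mathcal V\cap G=\{(2\mu,\mu^2):\mu\in\D\}$ onto itself. The relevant intrinsic description is this: since $\Phi_{\bullet}(s,p)=(2p\,\bullet-s)/(2-s\,\bullet)$ is a Möbius transformation in $\bullet$ with ``determinant'' $4p-s^2$, a point $\lambda\in G$ lies in $\mathcal R$ if and only if the map $\omega\mapsto\Phi_\omega(\lambda)$ is constant on $\T$; and in that case $\Phi_\omega(\lambda)=-\mu_\lambda$ for the $\mu_\lambda$ with $\lambda=(2\mu_\lambda,\mu_\lambda^2)$. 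Substituting into $\Phi_\omega\circ F=m_\omega\circ\Phi_{\rho(\omega)}$ gives, for $\lambda\in\mathcal R$, $\Phi_\omega(F(\lambda))=m_\omega(-\mu_\lambda)$, and what must be shown is that this is independent of $\omega$. By the rigidity of $\Aut\D$ (an automorphism of $\D$ with two distinct fixed points in $\D$ is the identity) it suffices to prove that $F$ maps two distinct royal points into $\mathcal R$: if $m_\omega(-\mu_i)$ is independent of $\omega$ for $\mu_1\neq\mu_2$, then $m_{\omega}^{-1}m_{\omega'}$ fixes both $-\mu_1$ and $-\mu_2$ for all $\omega,\omega'$, so $m_\omega$ is independent of $\omega$, say $m_\omega\equiv m_*$, and then $\Phi_\omega(F(\lambda))=m_*(-\mu_\lambda)$ is independent of $\omega$ for \emph{every} royal $\lambda$, i.e.\ $F(\mathcal R)\subseteq\mathcal R$. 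That $F$ does map two royal points into $\mathcal R$ is where Corollary \ref{invar} has to be combined with the finer geometry of $G$ from Theorem \ref{Ggeos} (the foliation of $G$ by the complex geodesics $F_\beta$, each of which meets $\mathcal R$) together with the continuity established above; this is the part that requires real work. Granting $F(\mathcal R)\subseteq\mathcal R$ and, by the same argument for $F^{-1}$, $F^{-1}(\mathcal R)\subseteq\mathcal R$, we conclude $F(\mathcal R)=\mathcal R$; then $F|_\mathcal R$ is a biholomorphic self-map of $\mathcal R\cong\D$, so there is a unique $n\in\Aut\D$ with $F(2\mu,\mu^2)=(2n(\mu),n(\mu)^2)$ for all $\mu$, i.e.\ $F$ and $\tau(n)$ agree on $\mathcal R$.

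It then remains to show that $H:=\tau(n)^{-1}\circ F\in\Aut G$, which fixes $\mathcal R$ pointwise, is the identity. For this I would use the symmetrization map $\pi:\D^2\to G$, $\pi(z_1,z_2)=(z_1+z_2,z_1z_2)$: it is a proper holomorphic $2$-to-$1$ map whose critical set is the diagonal $\Delta=\{z_1=z_2\}$, with $\pi(\Delta)=\mathcal R$, and it restricts to an unbranched $2$-to-$1$ covering $\D^2\setminus\Delta\to G\setminus\mathcal R$. Since $G$ is simply connected and $\mathcal R$ a connected smooth complex hypersurface, $\pi_1(G\setminus\mathcal R)\cong\mathbb{Z}$ and the image of $\pi_*$ has index $2$; as $H$ restricts to a homeomorphism of $G\setminus\mathcal R$, the lifting criterion is met, so $H\circ\pi$ restricted to $\D^2\setminus\Delta$ lifts to a biholomorphism of $\D^2\setminus\Delta$, which extends across $\Delta$ (Riemann's removable-singularity theorem, the map being bounded) to some $\tilde H\in\Aut(\D^2)$ with $\pi\circ\tilde H=H\circ\pi$. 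Because $H$ fixes $\mathcal R$ pointwise and $\pi^{-1}(2z,z^2)=\{(z,z)\}$ is a single point, $\tilde H$ fixes $\Delta$ pointwise; since $\Aut(\D^2)=(\Aut\D\times\Aut\D)\rtimes S_2$, the only elements fixing $\Delta$ pointwise are the identity and the coordinate flip $(z_1,z_2)\mapsto(z_2,z_1)$, and both satisfy $\pi\circ(\,\cdot\,)=\pi$. Hence $H\circ\pi=\pi$, so $H=\mathrm{id}_G$ and $F=\tau(n)$. This proves surjectivity, and with it the theorem.
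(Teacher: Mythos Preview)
Your proposal has a genuine gap at the crucial step: you do not prove that $F$ preserves the royal variety. You yourself flag this---``this is the part that requires real work''---and then simply write ``Granting $F(\mathcal R)\subseteq\mathcal R$''. The reduction you outline (show two royal points go to royal, deduce $m_\omega$ is constant, then all royal points go to royal) is circular as it stands: you never establish the base case, and the vague appeal to continuity and the foliation by $F_\beta$'s does not yield it. The paper closes this gap with Lemma~\ref{royal}, whose key input is Theorem~\ref{Ggeos}(3): the royal and flat geodesics are exactly the complex geodesics on which \emph{every} $\Phi_\omega$ is a Carath\'eodory extremal for any pair of points. That condition is manifestly automorphism-invariant once one has Corollary~\ref{invar} (since $\Phi_\omega\circ\alpha=m\circ\Phi_\zeta$ and $\rho$ is $\Aut\D$-invariant), so $\alpha$ permutes the class \{royal\}$\cup$\{flat\}; the royal variety is then singled out as the unique member meeting more than one other. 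Your pointwise criterion ``$\lambda\in\mathcal R$ iff $\omega\mapsto\Phi_\omega(\lambda)$ is constant'' is correct but not what you need, because after composing with $F$ the $m_\omega$ contaminates the $\omega$-dependence; the paper's criterion is phrased in terms of the Carath\'eodory distance, which \emph{is} automorphism-invariant.

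For the endgame (an automorphism fixing $\mathcal R$ pointwise is the identity) you take a genuinely different route from the paper. The paper stays with the magic functions: from $\Phi_\omega\circ\alpha=m\circ\Phi_{\zeta_\omega}$ and $\alpha|\mathcal V=\mathrm{id}$ one gets $m=\mathrm{id}$ on the royal variety, then the action on the flat geodesic $\varphi_0(\D)=\{(0,\lambda)\}$ pins down $\zeta_\omega=\eta\omega$, and equating coefficients in $\Phi_\omega(\alpha(s,p))=\Phi_{\eta\omega}(s,p)$ forces $\alpha=\mathrm{id}$. Your covering-space argument via $\pi:\D^2\to G$ and $\Aut(\D^2)$ is more conceptual and does work, but two assertions you make without proof need justification: that $\pi_1(G\setminus\mathcal R)\cong\mathbb Z$ (this is \emph{not} a general fact about complements of smooth hypersurfaces in simply connected domains; here it follows from the explicit $2$-fold cover $\D^2\setminus\Delta\to G\setminus\mathcal R$ once one checks that the meridian of $\mathcal R$ squared lifts to a generator of $\pi_1(\D^2\setminus\Delta)$), and that the lift $\tilde H$, a priori a biholomorphism of $\D^2\setminus\Delta$, extends to an automorphism of $\D^2$ (boundedness plus Riemann extension gives a holomorphic self-map; you still need to argue it is bijective). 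These are repairable, but the paper's computation avoids all of this.
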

 
 It is immediate that $\tau$ is a homomorphism and is injective; we prove that $\tau$ is surjective by combining the invariance property of the $\Phi_\omega$ (Corollary \ref{invar}) with consideration of the action of automorphisms on certain geodesics.  Recall that the {\em Carath\'eodory distance} on a bounded domain $\Omega$ is the distance function on $\Omega$
 $$
C_\Omega (\lambda, \mu) = \sup\limits_F \rho (F(\lambda), F(\mu))
$$
 where $\rho$ is the pseudohyperbolic distance on $\mathbb{D}$ and the supremum is taken over all analytic functions $F: \Omega \to \mathbb{D}$.  A {\em complex geodesic} of $\Omega$ is an analytic function $\varphi: \mathbb{D} \to \Omega$ that is isometric with respect to $\rho$ and $C_\Omega$.  We identify the complex geodesics $\varphi$ and $\varphi \circ m$ for  any $m\in \Aut \mathbb{D}$.  It is clear that an automorphism of $\Omega$ induces a permutation of the complex geodesics of $\Omega$; in the case that $\Omega = G$, the complex geodesics are known explicitly, and we may deduce information about the automorphisms of $G$.
 
For $\beta, \lambda \in \mathbb{D}$ define
$$\varphi_\beta (\lambda) = (\beta \lambda + \bar \beta, \lambda) \in \mathbb{C}^2.$$
$\varphi_\beta$ maps $\mathbb{D}$ into $G$, and in fact $\varphi_\beta$ is a complex geodesic of $G$ \cite[Theorem 2.1]{AY3}, \cite[Theorem 0.1] {AY6}.  We call $\varphi_\beta$ a {\em flat geodesic} of $G$.  There are also complex geodesics of $G$ that are rational of degree 2 \cite[Theorem 2.2]{AY3}, \cite [Theorem 0.3]{AY6}, but here we only need the special one 
$\psi(\lambda) = (2\lambda, \lambda^2)$,
that is, the royal variety $\mathcal{V}$.  We shall need the following facts about these geodesics \cite[Theorem 5.5]{AY6}, \cite[Theorem 0.3] {AY7}.  

\begin{theorem}\label{Ggeos}
\begin{enumerate}
\item[\rm(1)]  Each point of $G$ lies on a unique flat geodesic.

\item[\rm(2)]  Each flat geodesic meets the royal variety exactly once.

\item[\rm(3)]  For $z_1, z_2 \in G$, the following are equivalent:
\begin{itemize}
\item[\rm(i)]  $\Phi_\omega$ is an extremal function for the Carath\'eodory extremal problem for $z_1, z_2$ for {\em every} $\omega \in\mathbb{T}$;

\item[\rm(ii)]  $z_1, z_2$ lie either on the royal variety or on a flat geodesic.
\end{itemize}
\end{enumerate}
\end{theorem}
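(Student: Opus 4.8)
The plan is to settle parts~(1) and~(2) by direct computation with the flat geodesics $\ph_\beta$, and to reduce part~(3), via the Schwarz--Pick lemma, to a statement about the complex geodesics of $G$, where the structure theory of $G$ does the work.

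For~(1), recall the standard description $G=\{(s,p):\ |s-\bar s p|<1-|p|^2\}$ (so in particular $|p|<1$ on $G$). Given $(s,p)\in G$, put $\beta=(\bar s-s\bar p)/(1-|p|^2)$; then $|\beta|<1$ and a short computation gives $\ph_\beta(p)=(s,p)$, so $(s,p)$ lies on $\ph_\beta$. Moreover $\beta$ is the \emph{only} $\beta'\in\C$ with $\beta' p+\bar{\beta'}=s$, since that equation together with its conjugate is a linear system in $(\beta',\bar{\beta'})$ with determinant $|p|^2-1\ne0$. Hence each point of $G$ lies on exactly one flat geodesic. For~(2), the point $\ph_\beta(\la)=(\beta\la+\bar\beta,\la)$ lies on $\mathcal{V}$ exactly when its coordinates obey $s^2=4p$, that is, when
\[
\beta^2\la^2+(2|\beta|^2-4)\la+\bar\beta^2=0 .
\]
For $\beta=0$ this forces $\la=0$ and $\ph_0(0)=(0,0)\in\mathcal{V}$. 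For $\beta\ne0$, dividing by $\bar\beta^2$ and setting $\mu=(\beta/\bar\beta)\la$ (so $|\mu|=|\la|$) yields $\mu^2-t\mu+1=0$ with $t=4|\beta|^{-2}-2>2$, which has two distinct positive real roots of product $1$, hence exactly one in $(0,1)$. The corresponding $\la\in\D$ is the unique parameter with $\ph_\beta(\la)\in\mathcal{V}$ (and $\ph_\beta(\la)\in\mathcal{V}\cap G$ since $\la\in\D$). So each flat geodesic meets $\mathcal{V}$ exactly once.

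For (ii)$\Rightarrow$(i) in part~(3), I would use that $\psi(\zeta)=(2\zeta,\zeta^2)$ and each $\ph_\beta$ are complex geodesics of $G$. If $z_1,z_2\in\mathcal{V}$, write $z_j=\psi(\zeta_j)$; then $\Phi_\omega(\psi(\zeta))=-\zeta$ for \emph{every} $\omega\in\T$, so $\rho(\Phi_\omega(z_1),\Phi_\omega(z_2))=\rho(\zeta_1,\zeta_2)$, while $C_G(z_1,z_2)\le\rho(\zeta_1,\zeta_2)$ because $\psi$ is distance-decreasing and $C_G(z_1,z_2)\ge\rho(\Phi_\omega(z_1),\Phi_\omega(z_2))$ by the definition of $C_G$; hence every $\Phi_\omega$ is extremal. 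If instead $z_1,z_2$ lie on a common flat geodesic $\ph_\beta$, write $z_j=\ph_\beta(\zeta_j)$ and recall from the proof of Theorem~\ref{exrays} that $\Phi_\omega(\beta\zeta+\bar\beta,\zeta)=\tau B_\al(\zeta)$ with $\tau\in\T$ and $\al\in\D$; thus $\Phi_\omega\circ\ph_\beta\in\Aut\D$, and the same contraction argument gives $\rho(\Phi_\omega(z_1),\Phi_\omega(z_2))=\rho(\zeta_1,\zeta_2)=C_G(z_1,z_2)$ for every $\omega\in\T$.

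For (i)$\Rightarrow$(ii), fix a complex geodesic $\ph:\D\to G$ with $\ph(\zeta_1)=z_1$ and $\ph(\zeta_2)=z_2$ (such a geodesic exists, $G$ being taut and $\C$-hyperbolic), so $C_G(z_1,z_2)=\rho(\zeta_1,\zeta_2)$. Writing $g_\omega=\Phi_\omega\circ\ph$ we have $\rho(\Phi_\omega(z_1),\Phi_\omega(z_2))=\rho(g_\omega(\zeta_1),g_\omega(\zeta_2))$, which by Schwarz--Pick equals $\rho(\zeta_1,\zeta_2)$ if and only if $g_\omega\in\Aut\D$; so (i) says precisely that $g_\omega\in\Aut\D$ for every $\omega\in\T$. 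Now $g_\omega(\la)=(2\omega p(\la)-s(\la))/(2-\omega s(\la))$ is rational in $\la$ of degree constant in $\omega$ off a finite set, so ``$\deg g_\omega\le1$ for all $\omega\in\T$'' propagates to all $\omega\in\C$; evaluating at $\omega=0$ gives $\deg s\le1$, and letting $\omega\to\infty$ then gives $\deg p\le2$. At this point I would invoke the classification of the complex geodesics of $G$ from \cite{AY3} and \cite{AY6}: $\ph$ is a flat geodesic, or a reparametrisation of $\psi$, or a genuinely degree-$2$ geodesic, and in the last case $\deg g_\omega=2$ for all but finitely many $\omega$, contradicting~(i). Hence $\ph$ is flat or royal, so $z_1,z_2$ lie on a flat geodesic or on $\mathcal{V}$ respectively. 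The step I expect to be the main obstacle is exactly this last one --- deciding which analytic discs through two given points are complex geodesics and excluding the genuinely degree-$2$ ones --- and it is where the results of \cite{AY3,AY6,AY7} (equivalently, the formula $C_G(z_1,z_2)=\max_{\omega\in\T}\rho(\Phi_\omega(z_1),\Phi_\omega(z_2))$ together with the explicit list of geodesics) are indispensable.
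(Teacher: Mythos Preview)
The paper does not prove this theorem at all; it simply cites \cite[Theorem~5.5]{AY6} and \cite[Theorem~0.3]{AY7} and uses the statement as a black box. So there is no ``paper's own proof'' to compare with, and your write-up is already more than what appears here.

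Your arguments for (1), (2) and (ii)$\Rightarrow$(i) are clean and correct. Two remarks on (i)$\Rightarrow$(ii). First, the justification ``such a geodesic exists, $G$ being taut and $\C$-hyperbolic'' is not enough: tautness gives Kobayashi extremal discs, but a \emph{complex geodesic} in the Carath\'eodory sense through two arbitrary points requires the Lempert property $C_G=K_G$, which for the non-convex domain $G$ is a separate theorem (proved in \cite{AY6}, see also \cite{costara}). Second, your degree argument (``$g_\omega$ is rational in $\la$ of degree constant in $\omega$ \dots'') tacitly assumes that the geodesic $\ph=(s,p)$ is already rational; without the classification you are about to invoke, $s$ and $p$ are merely holomorphic and $g_\omega$ need not be rational, so the step ``$\deg g_\omega\le 1$ for all $\omega\in\T$ propagates to all $\omega\in\C$'' has no meaning yet. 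You recognise this yourself in the last paragraph: the implication (i)$\Rightarrow$(ii) genuinely needs the explicit description of the geodesics of $G$ from \cite{AY6,AY7}, and once you cite that, the degree discussion is superfluous --- you can simply say that a degree-$2$ geodesic has $\Phi_\omega\circ\ph$ of degree~$2$ for all but finitely many $\omega$, hence not in $\Aut\D$, contradicting~(i).
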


Statement 3(i) means: for all $\omega \in\mathbb{T}$,
$$C_G(z_1, z_2) = \rho\left(\Phi_\omega (z_1), \Phi_\omega(z_2)\right).$$

\begin{lemma}\label{royal}
Every automorphism of $G$ maps the royal variety to itself and maps every flat geodesic to a flat geodesic.
\end{lemma}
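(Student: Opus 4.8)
Fix $F\in\Aut G$, and write $\mathcal V$ here for the trace $\psi(\D)=\mathcal V\cap G$ of the royal variety on $G$. Call two distinct points $z_1,z_2\in G$ \emph{royally collinear} if they lie on a common flat geodesic or both lie on $\mathcal V$, and call a subset of $G$ royally collinear if each pair of its distinct points is. The strategy is: (a) show that $F$ preserves royal collinearity; (b) determine the maximal royally collinear subsets of $G$; (c) single out $\mathcal V$ among them by an incidence property that $F$ must respect.

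For (a), suppose $z_1,z_2$ are royally collinear. By the implication (ii)$\Rightarrow$(i) of Theorem~\ref{Ggeos}(3), $C_G(z_1,z_2)=\rho(\Phi_\omega(z_1),\Phi_\omega(z_2))$ for every $\omega\in\T$. Given $\omega\in\T$, Corollary~\ref{invar} furnishes $m\in\Aut\D$ and $\omega'\in\T$ with $\Phi_\omega\circ F=m\circ\Phi_{\omega'}$; since $m$ preserves $\rho$,
\[
\rho\big(\Phi_\omega(F(z_1)),\Phi_\omega(F(z_2))\big)=\rho\big(\Phi_{\omega'}(z_1),\Phi_{\omega'}(z_2)\big)=C_G(z_1,z_2).
\]
As the Carath\'eodory distance is an isometric invariant of a domain (apply its distance-decreasing property to $F$ and to $F^{-1}$), $C_G(F(z_1),F(z_2))=C_G(z_1,z_2)$, so $C_G(F(z_1),F(z_2))=\rho(\Phi_\omega(F(z_1)),\Phi_\omega(F(z_2)))$ for all $\omega\in\T$; now the implication (i)$\Rightarrow$(ii) of Theorem~\ref{Ggeos}(3) gives that $F(z_1),F(z_2)$ are royally collinear. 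This is the crux of the argument: it is here that the invariance of the magic-function family does the work.

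For (b), recall from Theorem~\ref{Ggeos}(1) that each point of $G$ lies on exactly one flat geodesic, so distinct flat geodesics are disjoint, and from Theorem~\ref{Ggeos}(2) that each flat geodesic meets $\mathcal V$ in precisely one point, so no two distinct points of $G$ lie simultaneously on $\mathcal V$ and on a common flat geodesic. From these facts one checks that the maximal royally collinear subsets of $G$ are exactly $\mathcal V$ and the individual flat geodesics $\varphi_\beta(\D)$, $\beta\in\D$: each of these sets is clearly royally collinear, and is maximal because adjoining any further point destroys royal collinearity; conversely, for a maximal royally collinear $S$ with distinct $z_1,z_2\in S$, exactly one of the two defining alternatives holds for them, and a short case analysis (using uniqueness of the flat geodesic through a point) forces every further point of $S$ to lie on $\mathcal V$, respectively on the common flat geodesic of $z_1$ and $z_2$, so that $S=\mathcal V$ or $S=\varphi_\beta(\D)$.

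For (c), both $F$ and $F^{-1}$ preserve royal collinearity by (a), so $F$ restricts to a bijection of the family $\{\mathcal V\}\cup\{\varphi_\beta(\D):\beta\in\D\}$ of maximal royally collinear sets, and this bijection preserves non-empty intersections. Within that family, $\mathcal V$ meets every other member by Theorem~\ref{Ggeos}(2), whereas a flat geodesic is disjoint from every other flat geodesic and there is more than one flat geodesic; hence $\mathcal V$ is the unique member meeting all the others. Therefore $F(\mathcal V)=\mathcal V$, and $F$ then permutes the remaining members, i.e. carries every flat geodesic onto a flat geodesic. The genuine obstacle is step (a); step (b), though fiddly, is routine bookkeeping with Theorem~\ref{Ggeos}, and step (c) is immediate.
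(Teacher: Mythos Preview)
Your proof is correct and follows essentially the same route as the paper's: the key step (your (a)) is exactly the paper's computation---use Corollary~\ref{invar} together with Theorem~\ref{Ggeos}(3) to show that if two points lie on the royal variety or on a common flat geodesic then so do their images, and then distinguish $\mathcal V$ by the incidence property of Theorem~\ref{Ggeos}(1),(2). The only difference is cosmetic: the paper works directly with the geodesics as analytic maps $\psi:\D\to G$ and argues that $\alpha\circ\psi$ is royal or flat, whereas you rephrase this in terms of pairs of points and ``maximal royally collinear sets'', which makes explicit in your step~(b) a small piece of bookkeeping that the paper leaves to the reader.
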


\proof
Let $\alpha \in\Aut G$ and let $\psi$ be either the royal or a flat geodesic.  Consider any pair of points on the complex geodesic $\alpha \circ \psi$ of $G$ -- say $z_1=\alpha \circ \psi(\lambda_1), z_2 = \alpha \circ \psi(\lambda_2)$ where $\lambda_1 \neq \lambda_2$ in $\mathbb{D}$.  Observe that, by Theorem  \ref{Ggeos}, statement (3)(i),
$$
C_G (\psi(\lambda_1), \psi(\lambda_2)) = \rho(\Phi_\zeta \circ \psi(\lambda_1), \Phi_\zeta \circ \psi(\lambda_2))
$$
for all $\zeta \in \mathbb{T}$.  We claim that every $\Phi_\omega, \omega \in \mathbb{T}$, is a Carath\'eodory extremal function for $z_1, z_2$.  Indeed, for $\omega \in \mathbb{T}$, by virtue of Corollary \ref{invar}, there exist   $m\in\Aut\D$,  $\zeta \in \mathbb{T}$ such that 
$$
\Phi_\omega \circ \alpha =  m\circ \Phi_\zeta.
$$
Then
\begin{eqnarray*}
\rho(\Phi_\omega (z_1), \Phi_\omega(z_2)) &=& \rho(\Phi_\omega(\alpha \circ \psi(\lambda_1)), \Phi_\omega(\alpha \circ \psi(\lambda_2))\\
&=& \rho(m\circ \Phi_\zeta \circ \psi(\lambda_1), m\circ\Phi_\zeta \circ \psi(\lambda_2)) \\
&=& \rho(\Phi_\zeta \circ \psi(\lambda_1), \Phi_\zeta \circ \psi(\lambda_2))\\
&=& C_G (\psi(\lambda_1), \psi(\lambda_2))\\
&=& C_G(\alpha \circ \psi(\lambda_1), \alpha \circ\psi(\lambda_2))\\
&=& C_G(z_1, z_2),
\end{eqnarray*}
and $\Phi_\omega$ is a Carath\'eodory extremal as claimed.  Hence, by Theorem \ref{Ggeos}, statement (3)(ii), the geodesic $\alpha \circ \psi$ is either royal or flat.    Among the class of royal or flat geodesics, the royal variety is the unique one that meets more than one other geodesic in the class, and this property is preserved by automorphisms.  Hence if $\psi$ is the royal variety then so is $\alpha \circ \psi$, and $\alpha \circ \varphi_\beta$ is a flat geodesic for every $\beta \in \mathbb{D}$. 
\qed

\vspace{3mm}

\noindent{\bf Proof
of Theorem \ref{autos}}.
Let $\alpha$ be an automorphism of $G$.  By Lemma \ref{royal}, $\alpha$ maps the royal variety $ \mathcal{V}=\psi(\mathbb{D})$ to itself.  Consider  the case that $\alpha|\mathcal{V}$ is the identity:
$$\alpha (2\lambda, \lambda^2) = (2\lambda, \lambda^2), \qquad \lambda \in \mathbb{D}.$$
We shall show that $\alpha$ is the identity map on $G$.  By Corollary \ref{invar}, for each $\omega \in \mathbb{T}$ there exist  $m\in\Aut\D$   and $ \zeta_\omega \in \mathbb{T}$ such that   
\begin{equation}\label{Phioa}
\Phi_\omega \circ \alpha =  m\circ  \Phi_{\zeta_\omega}.
\end{equation}
Now, for any $\omega \in \mathbb{T}$ and $\lambda \in \mathbb{D}, \Phi_\omega (2\lambda, \lambda^2) =-\lambda$.  On applying equation (\ref{Phioa}) to a general point $(-2\lambda, \lambda^2)$ of $\mathcal{V}$ we obtain  $\lambda = m( \lambda).$ 
Thus  $\Phi_\omega\circ \alpha = \Phi_{\zeta_\omega}$.

Next consider the restriction of $\alpha$ to the flat geodesic $\varphi_0 (\mathbb{D})$ through $(0, 0)$.  Since $\alpha$ fixes $(0, 0)$ and maps $\varphi_0(\mathbb{D})$ to a flat geodesic $\varphi_\beta(\mathbb{D})$, it must be that $\beta =0$.  Hence $\alpha$ induces an automorphism of the disc $\{(0, \lambda): \lambda \in \mathbb{D}\}$ that fixes $(0, 0)$, and so there exists $\eta \in \mathbb{T}$ such that $\alpha(0, \lambda) = (0, \eta \lambda)$ for all $\lambda \in \mathbb{D}$.
We have
$$\zeta_\omega \lambda = \Phi_{\zeta_\omega} (0, \lambda) = \Phi_\omega \circ \alpha(0, \lambda) = \Phi_\omega(0, \eta \lambda) = \omega \eta \lambda,$$
and so $\Phi_\omega \circ \alpha = \Phi_{\eta \omega}$ for all $\omega \in \mathbb{T}$.  If $\alpha(s, p) = (s', p')$ we have 
$$
\frac {2\omega p' -s'}{2-\omega s'} = \frac{2\eta \omega p -s}{2-\eta \omega s}
$$
for all $\omega \in \mathbb{T}$.  On cross-multiplying and equating coefficients of powers of $\omega$ we find that $(s', p') = (s, p)$, that is, $\alpha$ is the identity map.

We have shown that if $\alpha|\mathcal{V}$ is the identity then $\alpha$ is the identity on $G$.  Now suppose that $\alpha$ induces the automorphism $m$ on $\mathcal{V}$, in the sense that 
$$\alpha(2\lambda, \lambda^2) = (2m(\lambda), m(\lambda)^2)$$
for all $\lambda \in \mathbb{D}$.  Then the automorphism $\tau(m^{-1})$ of $G$ satisfies
$$\tau(m^{-1}) (2m(\lambda), m(\lambda)^2) = (2\lambda, \lambda^2)$$ 
and hence $\tau(m^{-1}) \circ \alpha$ is an automorphism of $G$ that restricts to the identity on $\mathcal{V}$.  Thus $\tau(m^{-1})\circ \alpha$ is the identity, and so $\alpha= \tau(m)$.  We have shown that $\tau: \Aut \mathbb{D} \to \Aut G$ is a surjective map. \qed

\begin{corollary}\label{inhom}
$G$ is inhomogeneous and asymmetric.
\end{corollary}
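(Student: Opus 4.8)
The plan is to extract both assertions from the computation of $\Aut G$ in Theorem \ref{autos} together with the fact, established in Lemma \ref{royal}, that every automorphism of $G$ preserves the royal variety $\mathcal{V}$.

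That $G$ is inhomogeneous is then immediate: by Lemma \ref{royal} the $\Aut G$-orbit of any point of $\mathcal{V}$ is contained in $\mathcal{V}$, while $\mathcal{V}$ is a proper subset of $G$. Indeed $(0,p)\in G$ for every $p\in\D$, since $(0,p)=(z+w,zw)$ with $w=-z$ and $z^{2}=-p$, whereas $(0,p)\in\mathcal{V}$ only when $p=0$; hence $\Aut G$ does not act transitively on $G$. Alternatively, by Theorem \ref{autos} the group $\Aut G\cong\Aut\D$ is a real Lie group of dimension $3$, so every orbit is an immersed submanifold of $G$ of real dimension at most $3<4=\dim_{\R}G$ and hence a proper subset of $G$.

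For the asymmetry, recall that a point $p$ of a domain is a \emph{point of symmetry} if it is an isolated fixed point of some involutive automorphism; I claim $G$ has no such point. The identity fixes every point, none of them in isolation, so it is enough to consider a nonidentity involution of $G$, which by Theorem \ref{autos} has the form $\tau(m)$ for an involution $m\in\Aut\D$ with $m\neq\mathrm{id}$. Writing a point of $G$ as $(z_{1}+z_{2},z_{1}z_{2})$ with $z_{1},z_{2}\in\D$, one checks at once that it is fixed by $\tau(m)$ precisely when $\{m(z_{1}),m(z_{2})\}=\{z_{1},z_{2}\}$, and, $m$ being an involution, this holds exactly when $z_{2}=m(z_{1})$; thus the fixed-point set of $\tau(m)$ in $G$ is $F=\{(z+m(z),\,z\,m(z)):z\in\D\}$. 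This set is connected, being the continuous image of $\D$, and it has more than one point: the value at the fixed point $a\in\D$ of $m$ is $(2a,a^{2})$, and if some $z_{0}\neq a$ gave the same value then $z_{0}$ and $m(z_{0})$ would both be roots of $(t-a)^{2}$, forcing $z_{0}=a$. A connected subset of a manifold having more than one point has no isolated points, so $\tau(m)$ has no isolated fixed point in $G$. Hence $G$ has no point of symmetry, i.e. $G$ is asymmetric. (One may also simply invoke the standard fact that a bounded symmetric domain is homogeneous, which together with the inhomogeneity above yields asymmetry immediately.)

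I do not anticipate any real difficulty here: granted Theorem \ref{autos} and Lemma \ref{royal}, the corollary is a short bookkeeping argument. The only points needing a little care are the production of a point of $G\setminus\mathcal{V}$ and the verification that the fixed-point set of a nontrivial involution is positive-dimensional rather than a single point, both of which are elementary.
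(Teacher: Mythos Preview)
Your proof is correct. For inhomogeneity you argue exactly as the paper does, via Lemma~\ref{royal} (preservation of $\mathcal V$); the explicit point of $G\setminus\mathcal V$ and the dimension count are harmless extras.

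For asymmetry your route differs from, and in fact strengthens, the paper's. The paper only verifies that the \emph{single} point $(0,0)$ is not an isolated fixed point of any involutive automorphism: an involution $\tau(m)$ fixing $(0,0)$ must be the identity or $(s,p)\mapsto(-s,p)$, and in neither case is $(0,0)$ isolated among the fixed points. You instead show that \emph{no} point of $G$ is a point of symmetry, by computing the full fixed-point set $F=\{(z+m(z),\,zm(z)):z\in\D\}$ of an arbitrary nontrivial involution $\tau(m)$ and observing that $F$ is connected and contains more than one point. The paper's argument is quicker, but yours is more informative: it shows, in particular, that the remark the authors make immediately after the proof --- that every point of $G\setminus\mathcal V$ is the \emph{unique} fixed point of a suitable involutive $\tau(m)$ --- cannot be right as stated, since by your computation the fixed-point set of any such $\tau(m)$ is a curve, never a single point (e.g.\ for $m(z)=-z$ one gets $\tau(m)(s,p)=(-s,p)$ with fixed set $\{(0,p):p\in\D\}$). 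Your parenthetical shortcut via ``symmetric $\Rightarrow$ homogeneous'' is also valid, though it imports a classical fact the authors are elsewhere at pains to avoid.
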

\begin{proof}
Every element of $\Aut G$ preserves the royal variety and so $G$ is inhomogeneous.  The statement that $G$ is asymmetric means that some point of $G$ is not an isolated fixed point of an involutive isomorphism of $G$.  Suppose that $\tau(m), m\in\Aut\D$, is an involution that fixes $(0,0)$.  It is easy to see that either $\tau(m)$ is the identity or $\tau(m)(s,p)=(-s,p)$ for all $(s,p)\in G$.  In neither case is $(0,0)$ an isolated fixed point.  
\qed
\end{proof}

Jarnicki and Pflug \cite{JP} prove inhomogeneity by showing that $G$ is not isomorphic to $\D^2$ or the ball and appealing to the Cartan classification of bounded homogeneous domains in $\C^2$; they then deduce that (in our terminology) the orbit of $(0,0)$ is the royal variety $\mathcal{V}\cap G$ and thence show that $\tau$ is surjective.

We remark that $G$ only just fails to be symmetric:  any point $(z+w,zw) \in G\setminus \mathcal{V}$ is the unique fixed point of an involutive automorphism $\tau(m)$, where $m\in\Aut\D$ is chosen to satisfy $m(z)=w$.

\section{The Carath\'eodory distance}\label{cara}
If one can find an economical generating set for $\Hered \Omega$  consisting of hereditary functions of the form $1-f^\vee f$ then one can deduce a formula for the Carath\'eodory pseudodistance on $\Omega$.  The main idea here is in \cite{Ag2}.
\begin{theorem}
\label{caragen}
Let $\mathcal{M}$ be a set of holomorphic functions on $\Omega$ such that
\newline\mbox{$\{1-f^\vee f: f \in \mathcal{M}\}$} generates $\Hered \Omega$ and let $x,y \in \Omega$. 
Then 
\begin{equation}
\label{caraY}
C_\Omega(x,y) = \sup_{f \in \mathcal{M}} \rho(f(x),f(y)),
\end{equation}
and furthermore, if $\mathcal{M}$ is compact, then $\mathcal{M}$ contains a Carath\'eodory extremal
function for $x$ and $y$.
\end{theorem}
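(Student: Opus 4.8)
The plan is to prove the two inequalities in (\ref{caraY}) separately and then to deduce attainment from compactness. The inequality $C_\Omega(x,y)\ge\sup_{f\in\mathcal M}\rho(f(x),f(y))$ is immediate: for $f\in\mathcal M$ the function $1-f^\vee f$ lies in $\Hered\Omega$, so by Proposition~\ref{simpleprops}(4) $|f|\le 1$ on $\Omega$; by the maximum principle $f$ is either a unimodular constant, in which case $\rho(f(x),f(y))=0$, or an analytic map of $\Omega$ into $\D$, in which case $\rho(f(x),f(y))\le C_\Omega(x,y)$ by the definition of the Carath\'eodory distance. Taking the supremum over $f$ gives the claim. (We may assume $x\ne y$, since otherwise both sides of (\ref{caraY}) vanish and every $f\in\mathcal M$ is trivially extremal; note also $\mathcal M\ne\emptyset$, because $\Hered\Omega\ne\{0\}$.)

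For the reverse inequality set $s=\sup_{f\in\mathcal M}\rho(f(x),f(y))$; if $s=1$ there is nothing to prove, so assume $s<1$. Let $F:\Omega\to\D$ be analytic; by M\"obius invariance of $\rho$ we may post-compose $F$ with an automorphism of $\D$ and so assume $F(x)=0$, whence $\rho(F(x),F(y))=|F(y)|$ and it suffices to show $|F(y)|\le s$. By Proposition~\ref{simpleprops}(4) $1-F^\vee F\in\Hered\Omega$, so, since $\{1-f^\vee f:f\in\mathcal M\}$ generates $\Hered\Omega$, there are for each $n$ finitely many $f_{n,i}\in\mathcal M$ and $g_{n,i}\in\Hol\Omega$ with
\[
\sum_i g_{n,i}^\vee\cdot(1-f_{n,i}^\vee f_{n,i})\cdot g_{n,i}\ \longrightarrow\ 1-F^\vee F
\]
locally uniformly on $\Omega\times\bar\Omega$ — the conjugacy-invariant convex cone generated by a set being exactly the set of finite sums of conjugates $g^\vee\cdot(\,\cdot\,)\cdot g$ of its members, since positive scaling is the special case of conjugation by a constant. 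Discarding terms in which $f_{n,i}$ is a unimodular constant (for which $1-f_{n,i}^\vee f_{n,i}=0$, so that they contribute nothing), we may take $|f_{n,i}(x)|<1$. Writing $a_{n,i}=g_{n,i}(x)$, $b_{n,i}=g_{n,i}(y)$, $\alpha_{n,i}=f_{n,i}(x)$, $\beta_{n,i}=f_{n,i}(y)$ and evaluating the convergence above at the points $(x,\bar x)$, $(y,\bar y)$ and $(x,\bar y)$ (using $F(x)=0$) gives
\begin{align*}
\sum_i|a_{n,i}|^2\bigl(1-|\alpha_{n,i}|^2\bigr)&\to 1,\\
\sum_i|b_{n,i}|^2\bigl(1-|\beta_{n,i}|^2\bigr)&\to 1-|F(y)|^2,\\
\sum_i\overline{b_{n,i}}\,a_{n,i}\bigl(1-\overline{\beta_{n,i}}\alpha_{n,i}\bigr)&\to 1.
\end{align*}

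The crux is the estimation of the third sum, and this is the step I expect to be the main obstacle to discover — the idea, essentially that of \cite{Ag2}, is to convert the abstract generation statement into a scalar inequality by a Cauchy--Schwarz estimate on this three-point configuration, controlled by a Schwarz--Pick identity. Concretely, the identity $(1-|\alpha|^2)(1-|\beta|^2)=(1-\rho(\alpha,\beta)^2)\,|1-\overline\beta\alpha|^2$ together with $\rho(f_{n,i}(x),f_{n,i}(y))\le s$ yields
\[
\frac{|1-\overline{\beta_{n,i}}\alpha_{n,i}|^2}{1-|\alpha_{n,i}|^2}\ \le\ \frac{1-|\beta_{n,i}|^2}{1-s^2}\,.
\]
Applying Cauchy--Schwarz to the third sum, with the $i$-th term split as $a_{n,i}(1-|\alpha_{n,i}|^2)^{1/2}$ times $\overline{b_{n,i}}(1-\overline{\beta_{n,i}}\alpha_{n,i})(1-|\alpha_{n,i}|^2)^{-1/2}$, we get
\[
\Bigl|\sum_i\overline{b_{n,i}}\,a_{n,i}\bigl(1-\overline{\beta_{n,i}}\alpha_{n,i}\bigr)\Bigr|^2\ \le\ \Bigl(\sum_i|a_{n,i}|^2\bigl(1-|\alpha_{n,i}|^2\bigr)\Bigr)\cdot\frac{1}{1-s^2}\Bigl(\sum_i|b_{n,i}|^2\bigl(1-|\beta_{n,i}|^2\bigr)\Bigr).
\]
Letting $n\to\infty$ gives $1\le(1-|F(y)|^2)/(1-s^2)$, that is $|F(y)|\le s$. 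Taking the supremum over all analytic $F:\Omega\to\D$ proves $C_\Omega(x,y)\le s$, and together with the first inequality this is (\ref{caraY}).

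Finally, suppose $\mathcal M$ is compact, and put $s=C_\Omega(x,y)>0$. The function $f\mapsto\rho(f(x),f(y))$ is continuous at every non-constant $f\in\mathcal M$, since such $f$ map $\Omega$ into $\D$ by the maximum principle and $\rho$ is continuous on $\D\times\D$. Choose $f_n\in\mathcal M$ with $\rho(f_n(x),f_n(y))\to s$; as $s>0$ we may assume each $f_n$ is non-constant, and by compactness (and metrizability of $\Hol\Omega$) a subsequence converges to some $f_*\in\mathcal M$. If $f_*$ is non-constant then $\rho(f_n(x),f_n(y))\to\rho(f_*(x),f_*(y))$, so $f_*\in\mathcal M$ is a Carath\'eodory extremal for $x,y$, as required. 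It remains to exclude the case that $f_*$ is a constant of modulus $1$ (it cannot be a constant of smaller modulus, since then $\rho(f_n(x),f_n(y))\to 0\ne s$); I expect this to be the delicate point — one renormalises by replacing $f_n$ with $m_n\circ f_n$ for $m_n\in\Aut\D$ chosen with $(m_n\circ f_n)(x)=0$ and extracts, via Montel's theorem, a genuine Carath\'eodory extremal, the remaining work being to trace it back into $\mathcal M$. In the case of interest, Corollary~\ref{compactgen}, this subtlety does not arise, since every $\Phi_\omega$ is non-constant.
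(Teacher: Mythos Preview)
Your proof of the equality (\ref{caraY}) is correct and takes a genuinely different route from the paper's. The paper argues via a two-dimensional operator model: for a normalised basis $u=(u_1,u_2)$ it forms the diagonal commuting $d$-tuple $T(u)$ with joint eigenvalues $x,y$, invokes from \cite{Ag2} the equivalence $\|f(T(u))\|\le 1\iff |\langle u_1,u_2\rangle|^2\le 1-\rho(f(x),f(y))^2$, chooses $u$ with $|\langle u_1,u_2\rangle|^2=1-s^2$, and then observes that $\{h:h(T(u))\ge 0\}$ is a closed conjugacy-invariant cone containing the generating set and hence all of $\Hered\Omega$; thus $\Omega$ is spectral for $T(u)$ and $C_\Omega(x,y)\le s$. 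You bypass the operator model entirely: you approximate $1-F^\vee F$ by finite conjugacy-sums, evaluate at $(x,\bar x),(y,\bar y),(x,\bar y)$, and finish with Cauchy--Schwarz governed by the Schwarz--Pick identity $(1-|\alpha|^2)(1-|\beta|^2)=(1-\rho(\alpha,\beta)^2)|1-\bar\beta\alpha|^2$. The two arguments are closely related --- your three scalar limits are the entries of $(1-F^\vee F)(T(u))$, and your Cauchy--Schwarz step is its $2\times 2$ positivity written out by hand --- but yours is more elementary, needing nothing of the hereditary calculus beyond Proposition~\ref{simpleprops}(4), whereas the paper's packaging makes the role of the cone axioms transparent and avoids any explicit approximation.

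On the compactness clause the paper is brief: it asserts that the supremum is a maximum, implicitly treating every $f\in\mathcal M$ as a map into the open disc so that $f\mapsto\rho(f(x),f(y))$ is continuous. You rightly isolate the residual case of a unimodular-constant limit $f_*$ and leave it unresolved in general; your proposed renormalisation by $m_n\in\Aut\D$ does yield a Carath\'eodory extremal, but as you note there is no reason it should land back in $\mathcal M$. For the intended application to $G$ this lacuna is immaterial, as you observe.
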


\proof
If $f\in \mathcal{M}$ then $1-f^\vee f \in \Hered \Omega$ and so $f \in \Hol(\Omega,\D)$. Thus
$$
\rho(f(x),f(y)) \le C_\Omega(x,y).
$$
Hence
$$
C_\Omega(x,y) \ge \sup_{f \in \mathcal{M}} \rho(f(x),f(y)).
$$

To prove the reverse inequality, write $x=(x_1,\dots,x_d), \ y=(y_1,\dots,y_d)$
and, corresponding to any normalised basis $u=(u_1,u_2)$ of a 2-dimensional
Hilbert space $H$, define $T(u)$ to be the commuting $d$-tuple $(T_1(u), \dots,T_d(u))$
of operators on $H$ where $T_j(u)$ is the operator whose matrix with respect to $u$
is $\mathrm{diag}(x_j,y_j)$.  A straightforward calculation \cite{Ag2}
shows that, for $f \in \Hol \Omega$,
\begin{equation}
\label{stcal}
||f(T(u))|| \le 1 \iff | <u_1,u_2> |^2 \le 1 - \rho(f(x),f(y))^2.
\end{equation}
It follows that
\begin{equation}
\label{caraform}
C_\Omega(x,y)^2 = 1 - \sup | <u_1, u_2> |^2
\end{equation} 
where the supremum on the right hand side is over all normalised bases $u$
of $H$ such that $\Omega$ is a spectral domain for $T(u)$.

Now pick a normalised basis $u$ of $H$ such that
\begin{equation} \label{chooseu}
| <u_1, u_2> |^2 = 1 - \sup_{f \in \mathcal{M}} \rho(f(x),f(y))^2.
\end{equation}
By the relation (\ref{stcal}), $||f(T(u))|| \le 1$ for all $f \in \mathcal{M}$,
and hence the closed conjugacy-invariant convex cone
$$
\{ h \in \Hol(\Omega \times \bar\Omega): h(T(u)) \ge 0 \}
$$
contains the set $E=\{1- f^\vee f: f \in \mathcal{M} \}.$  By hypothesis,
$E$ generates $\Hered \Omega$, and hence $h(T(u)) \ge 0$ for all
$h \in \Hered \Omega$.  In particular, $(1 - f^\vee f)(T(u)) \ge 0$
for all $f \in \Hol(\Omega, \D)$, or in other words, $\Omega$ is a 
spectral domain for $T(u)$.  By equations (\ref{caraform}) and (\ref{chooseu}),
\begin{eqnarray*}
C_\Omega(x,y)^2 &\le& 1 - |<u_1,u_2>|^2\\
	&=& \sup_{f \in \mathcal{M}} \rho(f(x),f(y))^2.
\end{eqnarray*}
Hence (\ref{caraY}) holds.  Clearly, if $\mathcal{M}$ is compact then the
supremum in formula $(\ref{caraY})$ is a maximum, which
is to say that some function in $\mathcal{M}$ is an extremal function
for the Carath\'eodory problem for $x$ and $y$.
\qed

We deduce a formula for the
Carath\'eodory distance on $G$ first given in \cite[Theorem 1.1, Corollary 3.5]{AY6}. 
\begin{corollary}
For any two points $x=(s_1,p_1),\ y=(s_2,p_2) \in G$,
\begin{eqnarray*}
C_G(x,y) &=& \sup_{\omega \in \T} \rho (\Phi_\omega(x), \Phi_\omega(y))\\
   &=& \sup_{\omega \in \T} \left| \frac{(s_2p_1-s_1p_2)\omega^2+2(p_2-p_1)\omega+s_1-s_2}{(s_1-\bar s_2p_1)\omega^2-2(1-p_1\bar p_2)\omega + \bar s_2-s_1\bar p_2} \right |.
\end{eqnarray*}
\end{corollary}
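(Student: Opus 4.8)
The plan is to obtain the first equality from the abstract Carath\'eodory formula of Theorem \ref{caragen}, and then to get the second equality by a direct computation with the pseudohyperbolic distance.

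For the first equality I would apply Theorem \ref{caragen} with $\mathcal{M}=\{\Phi_\omega:\omega\in\T\}$. Each $\Phi_\omega$ is holomorphic on $G$ (indeed it maps $G$ into $\D$), and $\{1-f^\vee f:f\in\mathcal{M}\}$ is precisely the set $\mathcal{Y}$ of Corollary \ref{compactgen}, which is a compact generating set for $\Hered G$. Since $\omega\mapsto\Phi_\omega$ is continuous into $\Hol G$, the set $\mathcal{M}$ is itself compact. Theorem \ref{caragen} then yields immediately that
\[
C_G(x,y)=\sup_{\omega\in\T}\rho(\Phi_\omega(x),\Phi_\omega(y)),
\]
and moreover that the supremum is attained, so that some $\Phi_\omega$ is a Carath\'eodory extremal function for $x$ and $y$.

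For the second equality it remains to compute $\rho(\Phi_\omega(x),\Phi_\omega(y))$ explicitly. Writing $\rho(a,b)=|a-b|/|1-\bar b a|$ and substituting $\Phi_\omega(s_j,p_j)=(2\omega p_j-s_j)/(2-\omega s_j)$, I would clear denominators: the difference $\Phi_\omega(x)-\Phi_\omega(y)$ has denominator $(2-\omega s_1)(2-\omega s_2)$, whereas $1-\overline{\Phi_\omega(y)}\Phi_\omega(x)$ has denominator $(2-\bar\omega\bar s_2)(2-\omega s_1)$. Since $|\omega|=1$ one has $|2-\bar\omega\bar s_2|=|2-\omega s_2|$, so in the quotient the factors $|2-\omega s_1|$ and $|2-\omega s_2|$ cancel and there remains
\[
\rho(\Phi_\omega(x),\Phi_\omega(y))=\frac{\bigl|(2\omega p_1-s_1)(2-\omega s_2)-(2\omega p_2-s_2)(2-\omega s_1)\bigr|}{\bigl|(2-\bar\omega\bar s_2)(2-\omega s_1)-(2\bar\omega\bar p_2-\bar s_2)(2\omega p_1-s_1)\bigr|}.
\]
Expanding the numerator gives $-2\bigl[(s_2p_1-s_1p_2)\omega^2+2(p_2-p_1)\omega+s_1-s_2\bigr]$. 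The denominator expands, using $\bar\omega\omega=1$, to $4(1-p_1\bar p_2)-2\omega(s_1-\bar s_2p_1)-2\bar\omega(\bar s_2-s_1\bar p_2)$; multiplying it through by $\omega$, which leaves the modulus unchanged, turns it into $-2\bigl[(s_1-\bar s_2p_1)\omega^2-2(1-p_1\bar p_2)\omega+\bar s_2-s_1\bar p_2\bigr]$. Cancelling the common factor $-2$ and taking the supremum over $\omega\in\T$ produces exactly the claimed formula.

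The argument has no genuine obstacle: the substantive content is already contained in Corollary \ref{compactgen} and Theorem \ref{caragen}, and what remains is an algebraic simplification. The only point needing a little care is the bookkeeping of the two different denominators and the double use of $|\omega|=1$ — first to identify $|2-\bar\omega\bar s_2|$ with $|2-\omega s_2|$, and then to multiply the denominator polynomial by $\omega$ without affecting its modulus.
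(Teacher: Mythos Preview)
Your proof is correct and follows exactly the paper's approach: the paper says only that the corollary is immediate from Theorem \ref{caragen} and Corollary \ref{compactgen}, and you have spelled this out, together with the algebraic computation for the second equality that the paper omits. Your bookkeeping with the denominators and the two uses of $|\omega|=1$ is accurate.
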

Thus, for every pair of points in $G$ there is a Carath\'eodory
extremal of the form $\Phi_\omega$ for some $\omega \in \T$.  The proof is immediate from Theorem \ref{caragen} and Corollary \ref{compactgen}.
\begin{proposition}\label{minimal}
The set 
$$
\mathcal{Y}=\{1- \Phi^\vee_\omega \Phi_\omega : \omega \in \T \}
$$
is a minimal closed generating set for $\Hered G$.
\end{proposition}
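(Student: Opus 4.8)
The plan is to derive minimality from the Carath\'eodory formula of Theorem~\ref{caragen}, exploiting the fact that, for a well chosen pair of points of $G$, the function $\Phi_{\omega_0}$ is a \emph{strictly} dominant Carath\'eodory extremal within the family $\{\Phi_\omega:\omega\in\T\}$. Since $\Y$ generates $\Hered G$ (Corollary~\ref{compactgen}), it is enough to show that no proper closed subset of $\Y$ does. So let $\mathcal Z\subsetneq\Y$ be closed, pick $g\in\Y\setminus\mathcal Z$, and write $g=1-\Phi_{\omega_0}^\vee\Phi_{\omega_0}$ with $\omega_0\in\T$. Because $\Y$ is the image of $\T$ under the continuous map $\omega\mapsto1-\Phi_\omega^\vee\Phi_\omega$ (Corollary~\ref{compactgen}), the set $K=\{\omega\in\T:1-\Phi_\omega^\vee\Phi_\omega\in\mathcal Z\}$ is closed in $\T$, does not contain $\omega_0$, and satisfies $\mathcal Z=\{1-\Phi_\omega^\vee\Phi_\omega:\omega\in K\}$. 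Assume, towards a contradiction, that $\mathcal Z$ generates $\Hered G$. Applying Theorem~\ref{caragen} with $\mathcal M=\{\Phi_\omega:\omega\in K\}$ gives
\[
C_G(x,y)=\sup_{\omega\in K}\rho\bigl(\Phi_\omega(x),\Phi_\omega(y)\bigr)\qquad(x,y\in G),
\]
while Corollary~\ref{compactgen} and Theorem~\ref{caragen} applied with $\mathcal M=\{\Phi_\omega:\omega\in\T\}$ give $C_G(x,y)=\sup_{\omega\in\T}\rho(\Phi_\omega(x),\Phi_\omega(y))$.

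The heart of the argument is then to exhibit, for the given $\omega_0$, points $x,y\in G$ at which the supremum over $\T$ is attained strictly at $\omega_0$. I would take $x=(0,0)$ and $y=\bigl(\tfrac12\bar\omega_0,\,0\bigr)$, both of which lie in $G$ (the second, since $\bigl(\tfrac12\bar\omega_0,0\bigr)=(z+w,zw)$ with $z=\tfrac12\bar\omega_0,\ w=0$). Then $\Phi_\omega(x)=0$ and $\Phi_\omega(y)=-\tfrac12\bar\omega_0\bigl(2-\tfrac12\omega\bar\omega_0\bigr)^{-1}$, whence
\[
\rho\bigl(\Phi_\omega(x),\Phi_\omega(y)\bigr)=\bigl|\Phi_\omega(y)\bigr|=\frac{1/2}{\bigl|\,2-\tfrac12\omega\bar\omega_0\,\bigr|}.
\]
Since $\bigl|2-\tfrac12\zeta\bigr|^2=\tfrac{17}{4}-2\,\Re\zeta$ for $\zeta\in\T$, the denominator is minimised over $\omega\in\T$ uniquely at $\omega=\omega_0$, where it equals $\tfrac32$; hence $\omega\mapsto\rho(\Phi_\omega(x),\Phi_\omega(y))$ attains its supremum $\tfrac13$ over $\T$ solely at $\omega_0$. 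As $K$ is a compact subset of $\T$ not containing $\omega_0$, it follows that $\sup_{\omega\in K}\rho(\Phi_\omega(x),\Phi_\omega(y))<\tfrac13=C_G(x,y)$, which contradicts the first displayed identity. Therefore no proper closed subset of $\Y$ generates $\Hered G$, so $\Y$ is a minimal closed generating set for $\Hered G$.

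I do not expect a real obstacle, but the choice of the pair of points needs a little care: it must lie off the royal variety and off every flat geodesic, since otherwise every $\Phi_\omega$ would be a Carath\'eodory extremal there (Theorem~\ref{Ggeos}(3)) and the displacements $\rho(\Phi_\omega(x),\Phi_\omega(y))$ would all coincide. This is why $y$ is taken with vanishing second coordinate and nonzero first coordinate, rather than, say, on the flat geodesic $\varphi_0$ through $(0,0)$. One could alternatively normalise $\omega_0=1$ first, using the rotation automorphism $(s,p)\mapsto(e^{i\phi}s,e^{2i\phi}p)$ of $G$, under which $\Phi_\omega(e^{i\phi}s,e^{2i\phi}p)=e^{i\phi}\Phi_{e^{i\phi}\omega}(s,p)$; but the direct computation above already covers all $\omega_0\in\T$.
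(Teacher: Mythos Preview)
Your argument is correct and follows essentially the same route as the paper's proof: assume a proper closed subset generates, invoke Theorem~\ref{caragen} to get a Carath\'eodory formula over the smaller index set, and then exhibit a pair of points for which the unique extremal $\Phi_\omega$ corresponds to the missing $\omega_0$, yielding a contradiction. The paper cites \cite[Theorem~1.6]{AY7} for the existence of such a pair and suggests $x=(0,0),\,y=(\bar\eta,0)$; you instead take $y=(\tfrac12\bar\omega_0,0)$ and carry out the elementary calculation directly, which is a clean self-contained alternative.
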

\begin{proof}
By Corollary \ref{compactgen}, $\mathcal{Y}$ is a compact generating set.  Suppose some proper closed subset $C$ of $\mathcal{Y}$ generates $\Hered G$.  Pick $\eta\in\T$ such that $1-\Phi^\vee_\eta\Phi_\eta \notin C$.  By \cite[Theorem 1.6]{AY7} there exist points $x,y\in G$ such that $\Phi_\omega$ is a Carath\'eodory extremal for $x,y$ uniquely for $\omega=\eta$: for example, we could take $x=(0,0),\, y=(\bar\eta,0)$.  We then have
\[
 \sup_{1- \Phi^\vee_\omega \Phi_\omega \in C} \rho(\Phi_\omega(x),\Phi_\omega(y)) < \rho(\Phi_\eta(x),\Phi_\eta(y)) = C_G(x,y).
\]
In view of Theorem \ref{caragen} this contradicts the assumption that $C$ generates $\Hered G$. \qed
\end{proof}

For all the domains $\D$, $\D^2$ and $G$   there is a compact set $\mathcal{M}$ of magic functions with  the property that $\{1-f^\vee f: f\in\mathcal{M}\}$ generates the hereditary cone.  Hence, for each of these domains, for any pair of distinct points in the domain there is a magic function that is a Carath\'eodory extremal.

\end{document}